\documentclass[12pt,a4paper,reqno]{amsart}
\usepackage{mystyle}


\def\firstauth#1{}
\def\second#1{}
\def\third#1{}

\begin{document}

\title{Multi-cyclic graphs in the random graph process with restricted budget}

\author[I\v{l}kovi\v{c}]{Daniel I\v{l}kovi\v{c}}
\address{Faculty of Informatics, Masaryk University, Botanick\'a 68A, 602 00 Brno, Czech Republic.}
\email{493343@mail.muni.cz}
\thanks{(DI, XS): Research supported by the MUNI Award in Science and Humanities (MUNI/I/1677/2018) of the Grant Agency of Masaryk University.}

\author[León]{Jared León}
\email{jared.leon@warwick.ac.uk}
\address{Mathematics Institute and DIMAP, University of Warwick, Coventry, UK}
\thanks{(JL): Research supported by the Warwick Mathematics Institute Centre for Doctoral Training.}
  
\author[Shu]{Xichao Shu}
\email{540987@mail.muni.cz}

\begin{abstract}

  We study a controlled random graph process introduced by Frieze, Krivelevich, and Michaeli. In this model, the edges of a complete graph are randomly ordered and
  revealed sequentially to a builder. For each edge revealed, the builder must
  irrevocably decide whether to purchase it. The process is subject to two
  constraints: the number of observed edges~$t$ and the builder's budget~$b$. The
  goal of the builder is to construct, with high probability, a graph possessing a
  desired property.

  Previously, the optimal dependencies of the budget $b$ on $n$ and $t$ were established for constructing a graph containing a
  fixed tree or cycle, and the authors claimed that their proof could be extended to
  any unicyclic graph. The problem, however, remained open for graphs containing at
  least two cycles, the smallest of which is the graph~$K_4^-$ (a clique of size four
  with one edge removed).

  In this paper, we provide a strategy to construct a copy of the graph~$K_4^-$
  if~$b \gg \max\left\{n^6 / t^4, n^{4 / 3} / t^{2 / 3}\right\}$, and show that this
  bound is tight, answering the question posed by Frieze et al. concerning this specific
  graph. We also give a strategy to construct a copy of a graph consisting of~$k$
  triangles intersecting at a single vertex (the $k$-fan)
  if~$b \gg \max\left\{n^{4k - 1} / t^{3k - 1}, n / \sqrt{t}\right\}$, and also show that
  this bound is tight. These are the first optimal strategies for
  constructing a multi-cyclic graph in this random graph model.
  
\end{abstract}

\maketitle

\section{Introduction}
Random graph processes have garnered significant attention in recent years, as they offer a natural framework for modeling the evolution of complex networks over time. These processes do not only provide a straightforward, and in some cases the only, method for constructing counterexamples in extremal graph theory but also exhibit strong connections to algorithmic graph theory. In particular, the analysis of random graph processes can lead to valuable insights into the typical running times of algorithms.

Random graph processes can be defined in various ways, with one of the most well-known and the simplest being the Erd\H{o}s–R\'enyi random graph process, introduced by Erd\H{o}s and R\'enyi~\cite{ER1959, ER1960}. This process begins with an empty graph on $n$ vertices. At each step, an edge is chosen uniformly at random from the set of non-edges and is added to the graph. 

At any fixed time $t$, the random graph process is distributed as a uniform random graph $G(n,t)$. A \defin{graph property} is a family of graphs that is invariant under isomorphisms. Such a property is called \defin{monotone} if it remains preserved under the addition of edges. The \defin{hitting time} for a monotone property $P$ is defined as the random variable $t_P=\min\set{t\in [N]:\text{$G(n,t)$ satisfies $P$ }}$. A significant body of research focuses on identifying the hitting time for various monotone graph properties in the random graph process. 

\subsection{Random graph process with restricted budget}

Beyond the classical binomial random graph model, also known as the Erd\H{o}s–R\'enyi
random graph, numerous fascinating variants have been investigated. Examples include the~$H$-free process \cite{TP2010,ESW1995,DA2021, Lutz2011}, with particular attention
given to the triangle-free process~\cite{Bohman2009, TP2021, FGGM2020}, the more recently studied model of
multi-source invasion percolation on the complete graph~\cite{ALB2023}, Achlioptas processes proposed by Dimitris Achlioptas \cite{SAW2014, TA2001, OL2012, OL2017, JN2007}, 
and the semi-random graph
process suggested by Peleg Michaeli~\cite{BHKPSS2020}, and studied recently in~\cite{BMPR2024, BGHK2020, GKP2024, GKMP2022, GMP2022, GH2023, PS2024}.

Recently, Frieze, Krivelevich, and Michaeli \cite{frieze2023} introduced the following controlled random graph process. Consider a Builder who observes the edges of a random permutation $e_1, \ldots, e_N$ one at a time and must make an irrevocable decision whether to select $e_i$ upon seeing it. A \((t, b)\)-strategy is an online algorithm (either deterministic or randomized) that Builder employs in this setting, where observes only the first $t$ edges of the random permutation of $E(K_n)$ and is permitted to select at most $b$ of them. The Builder’s goal is for her graph to achieve a specified monotone graph property of $P$. For instance, there exists a simple \((t_{\text{con}}, n-1)\)-strategy ensuring that Builder's graph becomes connected at the hitting time for connectivity: Builder selects an edge if and only if it reduces the number of connected components in the graph.

In this model, the problems of constructing spanning structures such as a perfect matching, a Hamilton cycle, a graph with a prescribed minimum degree, or a graph with a fixed level of connectivity have been extensively studied in~\cite{Michael2022, frieze2023, Kyriakos2024, Lichev2025}. It was demonstrated that for these properties, a linear edge budget suffices to achieve the desired structure at or shortly after the hitting time. This contrasts sharply with the superlinear hitting time required for achieving minimum degree one, which, as is well known, occurs at $\left(\frac{1}{2} + o(1)\right)n \log n$. 

Much less is known about the construction of small subgraphs, specifically those whose number of vertices remains bounded independently of the size of the host graph. The problem of constructing trees and unicyclic graphs was addressed in~\cite{frieze2023}. Notably, Frieze et al.~\cite{frieze2023} established the following results.

\begin{theorem}\label{tree}
    Let \( k \geq 3 \) be an integer and let \( T \) be a \( k \)-vertex tree. If \( t \geq b \gg \max\{(n/t)^{k-2}, 1\} \) then there exists a \( (t,b) \)-strategy \( B \) of Builder such that
\[
\lim_{n \to \infty} \mathbb{P}(T \subseteq B_t) = 1
\]
and if \( b \ll (n/t)^{k-2} \) then for any \( (t,b) \)-strategy \( B \) of Builder,
\[
\lim_{n \to \infty} \mathbb{P}(T \subseteq B_t) = 0.
\]
\end{theorem}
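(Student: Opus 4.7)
My plan is to establish the positive and negative assertions separately, beginning with the strategy. I would fix a depth-first-search ordering $v_1,\ldots,v_k$ of $T$ rooted at $v_1$, chosen so that $v_1,\ldots,v_j$ induces a connected subtree for every $j\ge 2$, and write $p(v_j)$ for the parent of $v_j$ in $T$. The Builder runs in $k-1$ phases of $\lfloor t/(k-1)\rfloor$ observations each, maintaining a family $\mathcal{T}_j$ of pairwise vertex-disjoint partial copies of $T$ on its first $j$ labelled vertices. In Phase~$1$ the Builder takes every observed edge, interpreting each as a fresh level-$2$ copy (playing $v_1v_2$), until a target count $C=\Theta(\max\{(n/t)^{k-2},1\})$ of such copies has been collected, skipping any edge whose endpoints intersect an already-created copy. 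In Phase $j$ for $2\le j\le k-1$ the Builder takes an observed edge precisely when one endpoint is the vertex playing $p(v_{j+1})$ in some $\tau\in\mathcal{T}_j$ and the other endpoint is currently unused; such an edge promotes $\tau$ from level $j$ to level $j+1$.

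The analysis tracks $C_j:=|\mathcal{T}_j|$ across phases. Since the partial copies are vertex-disjoint and occupy at most $kC=o(n)$ vertices in total, in Phase $j$ each level-$j$ copy has a single fixed attachment vertex, and a uniformly observed edge extends it with probability $(1-o(1))\cdot 2/n$. Over the $\Theta(t/k)$ fresh observations of Phase $j$, this gives $\mathbb{E}[C_{j+1}\mid C_j]\gtrsim C_j\cdot t/n$, and iterating yields $\mathbb{E}[C_k]\gtrsim C\cdot(t/n)^{k-2}$, which diverges under the hypothesis $b\gg(n/t)^{k-2}$. A Chebyshev (i.e.\ second-moment) argument, whose variance bound exploits the enforced vertex-disjointness to decouple the attempted extensions of distinct partial copies, then upgrades this to $C_k\ge 1$ with high probability. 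The total number of selected edges is $\sum_{j=2}^{k}C_j$, a near-geometric sum dominated by $C=O((n/t)^{k-2})$ when $t\le n$ and by $C_k=O(1)$ when $t>n$, in either case fitting within the budget $b$.

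For the converse, let $Y$ count labelled copies of $T$ contained in $B_t$; the goal is $\mathbb{E}[Y]=o(1)$ whenever $b\ll(n/t)^{k-2}$. Counting $T$-copies through each selected edge gives $(k-1)\,Y=\sum_{e\in B_t}\#\{T\text{-copies of }B_t\text{ through }e\}\le \sum_{e}\mathbf{1}[e\in B_t]\cdot N_e$, where $N_e$ denotes the number of $T$-copies of $G(n,t)$ that contain $e$. By the symmetry of the uniformly random observation order one has $\mathbb{P}(e\in B_t)\le b/N$ and $\mathbb{E}[N_e]=O((t/n)^{k-2})$ for every $e$. Controlling the correlation between the Builder's acceptance decision at $e$ and the eventual presence of a completing $T$-copy through $e$ (by exposing the permutation in two stages so that the decision at $e$ is made before most of the completing edges are revealed) yields $\mathbb{E}[Y]=O(b\cdot(t/n)^{k-2})$, which is $o(1)$ in the claimed range, and Markov's inequality finishes the argument.

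The step I expect to be the hardest is the concentration estimate in the upper bound: the $C_j$ sequence is defined inductively over $k-2$ phases, so a multiplicative deviation in any single phase can propagate and wipe out the entire supply of partial copies. I would address this by exposing each phase's observations as a martingale difference sequence and applying Azuma--Hoeffding, using vertex-disjointness to argue that the increments are small and that extensions of different partial copies are only weakly dependent. A secondary obstacle is the correlation control in the lower bound: an adaptive Builder could in principle prefer edges that sit in many potential $T$-copies, so one must show that such concentration cannot surpass the $b\,(t/n)^{k-2}$ ceiling, either through a careful staged exposure of the permutation or via an extremal bound on the number of $T$-copies that a sparse graph can support.
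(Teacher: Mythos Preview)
This theorem is not proved in the paper you were given. It is quoted verbatim from Frieze, Krivelevich and Michaeli~\cite{frieze2023} as background (note the sentence ``Notably, the authors established the following results'' immediately preceding it); the paper's own contributions are Theorems~\ref{thm:k4-} and~\ref{thm:k-butterfly}, and no argument for the tree statement appears anywhere in the text. There is consequently no proof in this paper against which to compare your proposal.

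On the proposal itself, briefly: the $1$-statement strategy (DFS ordering, phases, vertex-disjoint partial copies extended one leaf at a time) is the natural approach and is in the spirit of~\cite{frieze2023}. One slip is that taking $C=\Theta(\max\{(n/t)^{k-2},1\})$ only yields $\mathbb{E}[C_k]=\Theta(1)$, not $\omega(1)$; you should choose $C$ with $\max\{(n/t)^{k-2},1\}\ll C\ll b$, which the hypothesis permits. For the $0$-statement there is a genuine gap: the inequality $\mathbb{P}(e\in B_t)\le b/N$ is false edge-by-edge for an adaptive Builder (only the \emph{sum} over $e$ is at most $b$), and your proposed two-stage exposure does not obviously repair it, since Builder's decision whether to accept $e$ may legitimately depend on exactly the partial $T$-structure already visible through~$e$, which is the very correlation you need to kill. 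The $0$-statements that \emph{are} proved in this paper (Sections~3.2 and~4.2) follow a different template: they show that with high probability \emph{every} $b$-edge subgraph of $G_t$ contains only $o(n^2/t)$ copies of the target graph minus an edge, and then argue that the $O(t)$ subsequently revealed random edges are unlikely to hit any of the resulting candidate pairs. Adapting that scheme would be a more robust route for the tree lower bound than the first-moment-plus-decoupling argument you sketch.
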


\begin{theorem}\label{cycle}
    Let \( k \geq 1 \) be an integer and let \( H = C_{2k+1} \) or \( H = C_{2k+2} \). Write \( b^* = b^*(n,t,k) = \max\{n^{k+2}/t^{k+1}, n/\sqrt{t}\} \). If \( t \gg n \) and \( b \gg b^* \) then there exists a \( (t,b) \)-strategy \( B \) of Builder such that
\[
\lim_{n \to \infty} \mathbb{P}(H \subseteq B_t) = 1,
\]
and if \( t \ll n \) or \( b \ll b^* \) then for any \( (t,b) \)-strategy \( B \) of Builder,
\[
\lim_{n \to \infty} \mathbb{P}(H \subseteq B_t) = 0.
\]
\end{theorem}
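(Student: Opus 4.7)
The plan is to prove the two directions of Theorem~\ref{cycle} separately; in both cases I would extend the ideas used for Theorem~\ref{tree}.

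For the strategy (upper bound), I would use a bouquet-and-close approach. Pick a collection of $s$ prospective centers, and around each center greedily grow a depth-$k$ tree: upon seeing each revealed edge, Builder selects it iff it extends the current frontier of some bouquet to a vertex not already used. A Chernoff-type concentration argument shows each bouquet typically contains $\Theta((t/n)^k)$ vertices at distance exactly $k$ from its center, so the total number of depth-$k$ leaves across the $s$ bouquets is $M:=\Theta(s(t/n)^k)$. For $H=C_{2k+1}$ the closing event is a random edge between two depth-$k$ leaves of the same bouquet, which succeeds with probability $\Theta(M^2/s\cdot t/n^2)$; for $H=C_{2k+2}$ I would additionally grow, in one distinguished branch of each bouquet, one extra vertex at depth $k+1$, and close via a length-two path (through a fresh middle vertex) in the random graph, which succeeds with probability $\Theta(M^2/s\cdot nt^2/n^4)$. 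Both closing probabilities tend to infinity once
\[
 M \;\gg\; \max\!\left\{\frac{n}{\sqrt t},\,1\right\}\cdot \sqrt{s},
\]
(where the factor $\sqrt{s}$ absorbs the self-bouquet restriction), while the budget spent is $\Theta(M)$ for all bouquets together plus $O(1)$ closing edges. Minimising the budget over $s\ge 1$ and using the hypothesis $t\gg n$ yields exactly $b \gg \max\{n^{k+2}/t^{k+1},\, n/\sqrt t\}$, and a standard second-moment calculation upgrades existence to whp.

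For the impossibility (lower bound), the case $t\ll n$ is immediate since $G(n,t)$ is whp acyclic, so $B_t$ (a subgraph) contains no cycles at all. For $b\ll b^\ast$, I would bound the first moment of $X=|\{\text{copies of } H \text{ in } B_t\}|$. For a fixed copy $C$,
\[
\Pr[C\subseteq B_t] \;\le\; \Pr[E(C)\subseteq G(n,t)]\cdot\Pr[\text{Builder selects all of }E(C)\mid E(C)\subseteq G(n,t)].
\]
The first factor is $\Theta((t/N)^{|E(H)|})$ with $N=\binom{n}{2}$; an exchangeability argument bounds the second by $(b/t)^{|E(H)|}$ uniformly over every adaptive randomised strategy, since conditional on the positions of $E(C)$ among the first $t$ revealed edges, the constraint that Builder selects at most $b$ of those $t$ forces the joint selection probability of any specified $|E(H)|$-tuple to be at most $(b/t)^{|E(H)|}$. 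Summing over the $\Theta(n^{|V(H)|})$ copies yields $\mathbb{E}[X]=O(n^{|V(H)|}(b/N)^{|E(H)|})$, and $\mathbb{E}[X]\to 0$ reproduces $b\ll n^{k+2}/t^{k+1}$; a complementary argument, counting how many vertices of $B_t$ can simultaneously carry the two ``sides'' of the cycle, produces the bound $b\ll n/\sqrt t$.

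The main obstacle in the upper bound is making the BFS concentration argument work uniformly across the $s$ bouquets: they compete for fresh vertices, so some care is needed to avoid a cascade of collisions in the regime where $s$ is large (i.e., $t$ close to $n$). The main obstacle in the lower bound is justifying the $(b/t)^{|E(H)|}$ conditional-selection bound against an adaptive, possibly randomised Builder, since the selected edges among the first $t$ do \emph{not} form a uniformly random subset; the cleanest route appears to be a martingale/coupling argument that compares Builder's strategy to a random selection of $b$ out of the first $t$ revealed edges.
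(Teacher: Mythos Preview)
This theorem is not proved in the present paper: it is quoted from Frieze, Krivelevich and Michaeli~\cite{frieze2023} as background (see the sentence preceding Theorems~\ref{tree} and~\ref{cycle}), so there is no ``paper's own proof'' to compare against. Any assessment therefore has to be on the internal merits of your sketch.

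On those merits, the lower-bound half has a genuine gap. Your first-moment calculation hinges on the conditional bound
\[
\Pr[\text{Builder selects all of }E(C)\mid E(C)\subseteq G(n,t)]\le (b/t)^{|E(H)|},
\]
and you acknowledge this as the ``main obstacle''. In fact this bound is false in general for adaptive strategies: a Builder who first grows a star and then waits for a closing edge concentrates all of her budget on edges that are far more likely than average to lie in a copy of $H$, so the selection probability of a \emph{well-placed} copy is nowhere near $(b/t)^{|E(H)|}$. Moreover, even granting the bound, your computation does not yield the claimed threshold: for $H=C_{2k+1}$ one gets
\[
\mathbb{E}[X]=O\bigl(n^{2k+1}(b/n^2)^{2k+1}\bigr)=O\bigl((b/n)^{2k+1}\bigr),
\]
which vanishes iff $b\ll n$, independently of $t$; this is strictly weaker than the required $b\ll n^{k+2}/t^{k+1}$ whenever $t\gg n$, and it does not ``reproduce'' that threshold. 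The actual argument in~\cite{frieze2023} is structural rather than first-moment: one controls the number of long paths (or near-cycles) any $b$-edge subgraph of $G_t$ can contain, and shows there are too few candidate closing edges for the remaining time. Your ``complementary argument'' for $b\ll n/\sqrt{t}$ is not spelled out at all.

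The upper-bound sketch is closer in spirit to what is done in~\cite{frieze2023}, but as written it is imprecise: an edge between two depth-$k$ leaves of the same BFS tree creates a cycle of length at most $2k+1$, not exactly $2k+1$, so one must restrict to pairs of leaves whose tree-paths meet only at the root; and the closing-probability formula ``$\Theta(M^2/s\cdot t/n^2)$'' together with the optimisation over $s$ needs to be carried out explicitly to see that it actually reproduces $b^*$.
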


Theorems \ref{tree} and \ref{cycle} analyze the budget thresholds required for constructing trees
and cycles. Extending these results to any fixed unicyclic graph is straightforward:
once the cycle is formed, the remaining forest can be efficiently built with a
constant budget. Consequently, the smallest graph not covered by their results is the
graph on four vertices and five edges,~$K_4^-$ (the diamond).
A notable challenge is that several natural strategies lead to different upper bounds,
each one optimal within distinct time regimes. A similar issue arises in the case of
constructing cycles.

\subsection{Our results}
In this paper, we discuss optimal strategies for constructing small subgraphs with
multiple cycles. Firstly, we present an optimal strategy for the first unresolved
case outlined in~\cite{frieze2023}, the graph~$K_4^-$.
\begin{theorem}
  \label{thm:k4-}
  Let~$b^* = b^*(n, t, k) = \max\set{n^6 / t^4, n^{4 / 3} / t^{2 / 3}}$.
  If~$t = \omega\paren{n^{6 / 5}}$ and~$b = \omega\paren{b^*}$, then there exists
  a~$(t, b)$-strategy~$B$ of Builder such that
  \begin{equation*}
    \lim_{n \to \infty} \Prob{K_4^- \subset B_t} = 1,
  \end{equation*}
  and if~$t = o\paren{n^{6 / 5}}$ or~$b = o\paren{b^*}$ then for
  any~$(t, b)$-strategy~$B$ of Builder,
  \begin{equation*}
    \lim_{n \to \infty} \Prob{K_4^- \subset B_t} = 0.
  \end{equation*}
\end{theorem}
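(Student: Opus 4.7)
The plan is to establish the theorem by proving matching upper and lower bounds, splitting into two regimes depending on which term of $b^* = \max(n^6/t^4, n^{4/3}/t^{2/3})$ dominates. Regime~I will be $n^{6/5} \ll t \ll n^{7/5}$, where $b^* = n^6/t^4$; Regime~II will be $t \gg n^{7/5}$, where $b^* = n^{4/3}/t^{2/3}$. The two regimes call for qualitatively different strategies, reflecting two natural ways to view $K_4^-$: as a ``spine'' edge together with two ``wings,'' or as a degree-three vertex whose three neighbours contain a cherry (a path of length two).

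For the time lower bound $t = \omega(n^{6/5})$, I would invoke Markov's inequality: the expected number of copies of $K_4^-$ in $G(n,t)$ is $\Theta(t^5/n^6)$, which is $o(1)$ when $t = o(n^{6/5})$; since $B_t \subseteq G(n,t)$, the same holds for $B_t$. For the budget lower bounds, I would estimate $\mathbb{E}[\#\{K_4^- \subseteq B_t\}]$ for an arbitrary $(t,b)$-strategy in two complementary ways. First, counting each $K_4^-$ by its degree-three vertex $v$ together with its three $B_t$-neighbours, and using that the missing cherry on those neighbours lies in $G(n,t)$ with probability $O(p^2)$, one obtains a bound of order $\sum_v \binom{\deg_{B_t}(v)}{3}\, p^2 = O(b^3 p^2) = O(b^3 t^2/n^4)$, which is $o(1)$ exactly when $b = o(n^{4/3}/t^{2/3})$. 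Second, parameterising each $K_4^-$ by a spine edge $uv \in B_t$ together with an ordered pair of common neighbours of $u,v$ in $G(n,t)$, a moment computation on the count of common neighbours yields $\mathbb{E}[\#K_4^-] = O(\mathbb{E}[|B_t|] \cdot n^2 p^4) = O(b\, t^4/n^6)$, which is $o(1)$ when $b = o(n^6/t^4)$.

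For the upper bound in Regime~II, I would use a ``single-star'' strategy. Fix any vertex $v_0$; in the first phase purchase observed edges at $v_0$ until a star with $k = \omega(n^{4/3}/t^{2/3})$ leaves is built. This is feasible because $\deg_{G(n,t)}(v_0)$ concentrates around $2t/n$, which comfortably exceeds $k$ in Regime~II. In the second phase, purchase every observed edge between two leaves of the star, stopping as soon as two such edges sharing a leaf have been bought (a cherry among the leaves). The expected number of cherries among the $k$ leaves in $G(n,t)$ is $\Theta(k^3 p^2) = \omega(1)$, and a second-moment computation shows that one exists whp. Combining the star with the cherry yields a $K_4^-$ with $v_0$ as one of the two degree-three vertices.

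For the upper bound in Regime~I, I would use a ``spine-and-wings'' strategy. Builder acquires $M = \omega(n^6/t^4)$ candidate spine edges---disjoint when $t \gg n^{5/4}$ so that $M \leq n/2$, and otherwise allowing shared endpoints---ensuring each candidate has probability $\Theta(t^4/n^6)$ of being a true spine (that is, of having at least two common neighbours in $G(n,t)$). A second-moment argument then guarantees a true spine whp, after which Builder buys its four wing edges. The main obstacle is the online nature of the process: a wing edge may be observed long before its corresponding spine is recognised, so a purely reactive scheme fails to attain the tight bound $n^6/t^4$. I expect to resolve this with a carefully calibrated two-phase scheme in which one phase designates spine candidates while the other speculatively buys, with a small but well-chosen probability, edges that could later serve as wings. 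Striking the balance so that enough wings are simultaneously available whp for the eventual true spine while the total cost remains $(1+o(1))\,b^*$ is the technical heart of the proof and parallels, but is more delicate than, the cycle-construction strategy underlying Theorem~\ref{cycle}.
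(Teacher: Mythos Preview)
Your Regime~II upper bound (build one star, then wait for a cherry among the leaves) is exactly the paper's strategy for $t\gg n^{7/5}$, and the time lower bound via Markov is fine. The two substantial parts of the theorem, however, are not covered by your plan.

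\textbf{Regime I upper bound.} The spine-and-wings scheme does not reach the threshold $n^6/t^4$. Carrying out your speculative-buying idea: with $M$ spine candidates and each edge incident to a spine endpoint purchased with probability $q$, the cost is $\Theta(Mtq/n)$, while a fixed spine acquires two \emph{purchased} common neighbours with probability $\Theta(n^2(pq)^4)=\Theta(t^4q^4/n^6)$. Demanding $M\cdot t^4q^4/n^6\gg 1$ and $Mtq/n\le b$ forces $b\gg n^5/(t^3q^3)$, minimised at $q=1$ to give $b\gg n^5/t^3$, which is a factor $t/n\gg 1$ above the true threshold throughout Regime~I. The paper's strategy is structurally different and is the missing idea: fix $r=\omega(n^7/t^5)$ \emph{centre vertices} (not spine edges) with $rt/n=o(b)$; in a first phase buy full stars of size $\Theta(t/n)$ at these centres (cost $o(b)$); in a second phase buy every revealed edge lying inside one of the $r$ neighbourhoods, producing $\omega(n^3/t^2)$ triangles $vxy$ with $v$ a centre; in a third phase wait for a single edge $xz$ or $yz$ with $z$ another leaf of the same star, of which there are $\omega(n^2/t)$ candidates. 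It is the use of many stars (so that the neighbourhoods jointly cover $\Theta(rt^2/n^2)$ pairs) rather than many spines that makes the arithmetic close.

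\textbf{Budget lower bounds.} Your first-moment bounds silently assume that the ``remaining'' edges (the cherry for the $b^3p^2$ bound, the four wings for the $bt^4/n^6$ bound) lie in $G_t$ independently of which edges Builder has purchased. They do not: Builder may buy $vu,vx,vy$ precisely \emph{because} she has already seen $ux,uy$, so conditioning on the $K_{1,3}$ being in $B_t$ can push the cherry probability far above $p^2$. Concretely, in Regime~I every pair has $O(1)$ common neighbours in $G_t$, so the best one can say from your decomposition is $\#\{K_4^-\subseteq B_t\}=O(b)$, which is useless. The paper's argument is online rather than first-moment: at each step $s$, the number of pairs whose arrival would complete a $K_4^-$ is at most the number of copies of $C_4$ plus $K_3^+$ in $B_{s-1}$; they show that \emph{every} $b$-edge subgraph of $G_t$ contains $O(bt^3/n^4)$ such copies in Regime~I (and $o(n^2/t)$ in Regime~II), whence a union bound over the $t$ steps gives failure probability $o(1)$.
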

We also consider another kind of multi-cyclic graph. For an integer~$k \geq 1$,
let~$T_k$ be the graph consisting on~$k$ triangles intersecting in a single
vertex. Then we have the following result.
\begin{theorem}
  \label{thm:k-butterfly}
  Let~$k \geq 1$ be an integer, and
  let~$b^* = b^*(n, t, k) = \max\set{n^{4k - 1} / t^{3k - 1}, n / \sqrt{t}}$.
  If~$t = \omega\paren{n^{\frac{4k - 1}{3k}}}$ and~$b = \omega\paren{b^*}$, then there exists
  a~$(t, b)$-strategy~$B$ of Builder such that
  \begin{equation*}
    \lim_{n \to \infty} \Prob{T_k \subset B_t} = 1,
  \end{equation*}
  and if~$t = o\paren{n^{\frac{4k - 1}{3k}}}$ or~$b = o\paren{b^*}$ then for
  any~$(t, b)$-strategy~$B$ of Builder,
  \begin{equation*}
    \lim_{n \to \infty} \Prob{T_k \subset B_t} = 0.
  \end{equation*}
\end{theorem}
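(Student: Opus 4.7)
The plan is to prove Theorem~\ref{thm:k-butterfly} via two complementary upper-bound strategies, meeting at $t \sim n^{4/3}$ where the two terms defining $b^\ast$ coincide, together with three separate lower-bound arguments.

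For the regime $t=\omega(n^{4/3})$, where $b^\ast\sim n/\sqrt{t}$, I would generalise to $T_k$ the two-phase triangle construction implicit in Theorem~\ref{cycle}. Fix a vertex~$v$; during the first $t_1 = \omega(n^2/\sqrt{t})$ edges, select every edge incident to~$v$, obtaining a random arm set of size $d \sim 2t_1/n$; during the remainder, select every edge whose both endpoints lie in this arm set. The total budget is $\Theta(t_1/n)=\Theta(n/\sqrt{t})$, dominated by the arms. The resulting ``link'' graph is essentially distributed as $G(d, 2t/n^2)$, in which the expected number of matchings of size $k$ is of order $(d^2 \cdot t/n^2)^k / k!$; since $b=\omega(b^\ast)$ lets us choose $d^2 \cdot t/n^2 = \omega(1)$, a direct second-moment computation on the number of $k$-matchings gives concentration and hence a $k$-matching whp, and each such matching together with the already-selected arms yields a copy of $T_k$.

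For $n^{(4k-1)/(3k)}\ll t\ll n^{4/3}$, where $b^\ast\sim n^{4k-1}/t^{3k-1}$, a single centre is no longer enough because the expected number of $T_k$'s rooted at a fixed vertex is $(t^3/n^4)^k\ll 1$. I would therefore fix an arbitrary set $V^\ast\subset V(K_n)$ of $m=\Theta(n^{4k}/t^{3k})$ candidate centres, and for each observed edge $uw$ select it if it has an endpoint in $V^\ast$, or if some $v\in V^\ast$ already has both $vu$ and $vw$ selected. A direct calculation shows the ``arm'' edges contribute expected budget $\Theta(mt/n)=\Theta(n^{4k-1}/t^{3k-1})$, while the ``base'' edges contribute only $\Theta(m t^3/n^4)=\Theta((n^4/t^3)^{k-1})$, which is of lower order throughout this regime. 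The per-centre expected number of $T_k$-copies is $\Theta((t^3/n^4)^k)$, so the total expected count is $\Theta(1)$; enlarging the constant in $m$ boosts this to $\omega(1)$, and a second-moment argument using the strict balancedness of $T_k$ then yields existence whp.

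The lower bound splits into three sub-cases. For $t=o(n^{(4k-1)/(3k)})$, the first-moment calculation $\mathbb{E}[\#\{T_k\subset G(n,t)\}]\sim t^{3k}/n^{4k-1}=o(1)$ rules out existence even in the entire observation graph. For $b=o(n/\sqrt{t})$, a degree-counting argument on $B_t$ shows that no vertex can be the centre of a $T_k$. For $b=o(n^{4k-1}/t^{3k-1})$, I would upper-bound $\mathbb{E}[\#\{T_k\subset B_t\}]$ by noting that for any labelled copy $H\subset B_t$, the chronologically last edge of $H$ must be selected; a careful accounting combined with $\mathbb{E}[|B_t|]\le b$ yields $\mathbb{E}[\#\{T_k\subset B_t\}]=O(b\cdot t^{3k-1}/n^{4k-1})=o(1)$. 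The main obstacle is the second-moment step for the small-$t$ construction: two $T_k$-copies centred at distinct vertices of $V^\ast$ may share leaves, triangles, or base edges, with each overlap pattern contributing a distinct covariance term that must be dominated by the square of the first moment; controlling these terms requires a careful case analysis exploiting strict balancedness. A secondary subtlety is the $b=o(n^{4k-1}/t^{3k-1})$ lower bound, where the naive estimate $\Pr[H\subset B_t]\le\Pr[H\subset G(n,t)]$ does not suffice and the online nature of Builder's selection must be exploited to extract the extra factor $b/t$ via the last-edge observation.
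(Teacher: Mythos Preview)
Your two-regime split for the $1$-statement, and the long-time argument in particular (fix one vertex, buy incident edges in a first phase, then wait for a $k$-matching inside the neighbourhood), coincide with the paper's; the paper phrases the last step via \Cref{lem:threshold_graph_Gnm} rather than an explicit second moment, but this is cosmetic. For the short-time regime the paper also takes $r=\omega(n^{4k}/T^{3k})$ candidate centres and buys all arms in an initial phase, but then departs from your plan: instead of one further pass followed by a second-moment count of $T_k$-copies, it runs $k$ successive phases of length $T=t/(k+1)$, in round $i$ buying at most one new base edge (disjoint from the previously bought bases) inside each surviving neighbourhood and discarding those centres whose neighbourhood receives none. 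Bennett's inequality shows that whp a $\Theta(T^3/n^4)$-fraction of centres survive each round, so after $k$ rounds $\omega(1)$ centres remain, each already carrying $k$ disjoint bases. This replaces your overlap case-analysis---which you rightly flag as the main obstacle---by a concentration argument on a single scalar quantity, and handles the online constraint automatically since disjointness is enforced round by round.

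The real gap is in your $0$-statement. For $b=o(n/\sqrt t)$, degree counting on $B_t$ does not rule out a centre: a single vertex can easily have $B_t$-degree $2k$ even with tiny budget. The actual obstruction is that no triangle whatsoever can be built, which is the $C_3$ case of Theorem~\ref{cycle}, and the paper simply cites it. More seriously, for $b=o(n^{4k-1}/t^{3k-1})$ your last-edge first-moment bound does not close. The event that Builder purchases the chronologically last edge of a fixed copy $H$ is positively correlated with the other $3k-1$ edges lying in $B_{t}$, because an adaptive Builder will preferentially purchase precisely those edges that complete many near-copies; you cannot extract a multiplicative $b/t$ from $|B_t|\le b$ by linearity of expectation alone. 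The paper therefore abandons expectation and proves high-probability structural bounds valid for \emph{every} $b$-edge subgraph $G\subset G_t$: first $O(bt^2/n^3)$ triangles (via Bennett's inequality, \Cref{lm:triangle}), then inductively $|V_\ell|=O\bigl((bt^2/n^3)(t^3/n^4)^{\ell-1}\bigr)$ for the set $V_\ell$ of centres of a $T_\ell$ in $G$ (\Cref{lm:inductive}). Combining $|V_{k-1}|$ with the maximum-degree bound $\Delta(G_t)=O(t/n)$ controls the number of ``$T_k$ minus one edge'' configurations in $B_t$, and a union bound over the $t$ revealed edges then gives completion probability $o(1/t)\cdot t=o(1)$. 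Your last-edge observation is the correct entry point, but the substance of the argument is in these whp counts, which require the same Bennett-type concentration used throughout the paper rather than a direct expectation estimate on $\#\{T_k\subset B_t\}$.
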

We present in Figures \ref{fig:k4-} and \ref{fig:T-k} visualizations of
the dependencies between the time~$t$ and minimum threshold for a budget~$b$ in terms
of $n$. For better presentation, we scale them logarithmically. Also, notice that for
the case $k=1$, $T_k$ is a triangle and our formulas generalize the result in~\cite{frieze2023} for
a cycle of length 3.

The paper is organized as follows. In Section~2, we present the necessary notation and tools for our proof. Section~3 focuses on the optimal strategy for $K_4^-$. We will divide the argument into two parts to prove the 0-statement and 1-statement of Theorem~\ref{thm:k4-}. Section~4 contains the proof of Theorem \ref{thm:k-butterfly} which is also divided into two parts, one for the 0-statement and the other for the 1-statement. In the final section, Section~5, we will discuss extensions of our approach and several pertinent open problems.

\begin{figure*}[t!]
  \captionsetup{width=0.879\textwidth,font=small}
  \centering
  \begin{tikzpicture}
    \begin{axis}[
        xmin=1, xmax=2,
        ymin=0, ymax=1.5,
        x=10cm,y=5cm,
        samples=100,
        axis y line=center,
        axis x line=middle,
        xlabel={$\log_n{t}$},
        xtick={6/5,7/5,2},
        extra y ticks={0},
        xticklabels={6/5,7/5,2},
        ylabel={$\log_n{b}$},
        ytick={6/5,1,2/5,0},
        yticklabels={6/5,1,2/5,0},
        every axis x label/.style={
            at={(ticklabel* cs:1.05)},
            anchor=west,
        },
        every axis y label/.style={
            at={(ticklabel* cs:1.05)},
            anchor=south,
        },
        reverse legend,
    ]
        \addplot[color=gray,loosely dotted,thick,domain=1:2] {4/3-2*x/3};

        \addplot[color=gray,loosely dotted,thick,domain=1:6/5] {6/5};

        \addplot[color=red,thick,domain=6/5:7/5] {6-4*x};
        \addplot[color=red,thick,domain=7/5:2] {4/3-2*x/3};

        \legend{$x\mapsto 4/3-2x/3$,,$K_4^-$}
    \end{axis}
  \end{tikzpicture}
  \caption{Budget threshold for $K_4^-$.}
  \label{fig:k4-}
\end{figure*}
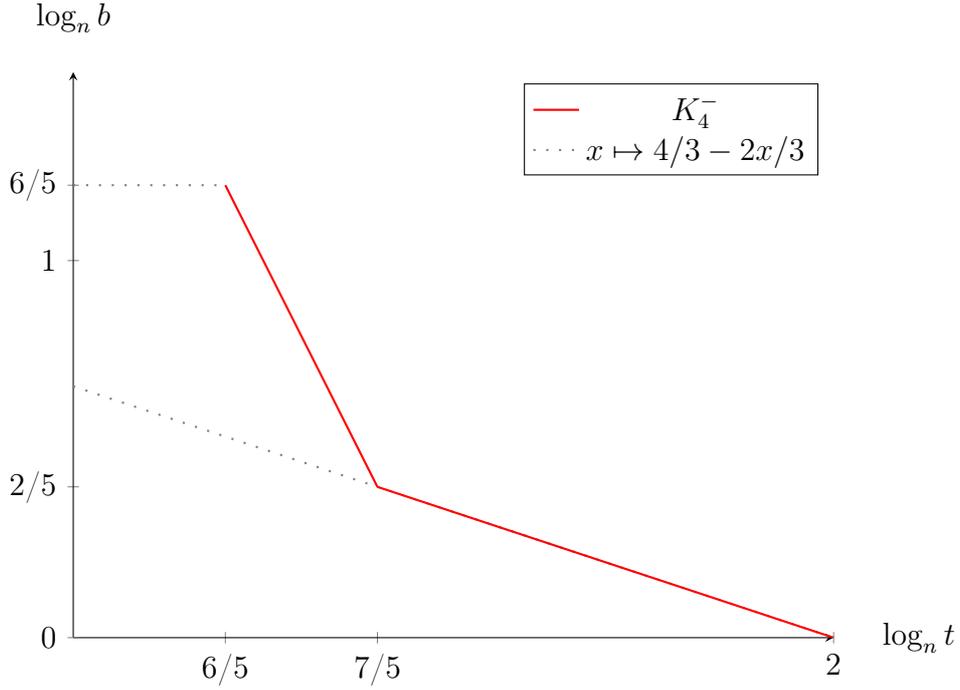

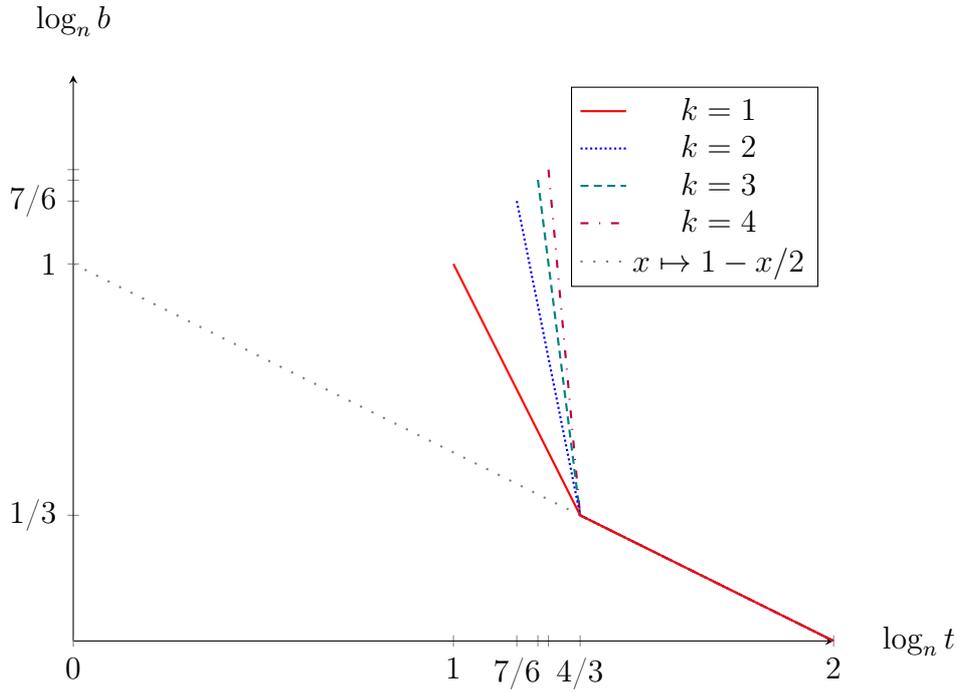
\begin{figure*}[t]
  \captionsetup{width=0.879\textwidth,font=small}
  \centering
  \begin{tikzpicture}
    \begin{axis}[
        xmin=0, xmax=2,
        ymin=0, ymax=1.5,
        x=5cm,y=5cm,
        samples=100,
        axis y line=center,
        axis x line=middle,
        xlabel={$\log_n{t}$},
        xtick={0,1,7/6,11/9,5/4,4/3,2},
        extra x ticks={0},
        xticklabels={0,1,7/6,,,4/3,2},
        ylabel={$\log_n{b}$},
        ytick={7/6,11/9,5/4,1,1/3,0},
        yticklabels={7/6,,,1,1/3,0},
        every axis x label/.style={
            at={(ticklabel* cs:1.05)},
            anchor=west,
        },
        every axis y label/.style={
            at={(ticklabel* cs:1.05)},
            anchor=south,
        },
        reverse legend,
    ]
        \addplot[color=gray,loosely dotted,thick,domain=0:2] {1-x/2};

        \addplot[color=purple,loosely dashdotted,thick,domain=5/4:4/3] {15-11*x};
        \addplot[color=purple,loosely dashdotted,thick,domain=4/3:2] {1-x/2};

        \addplot[color=teal,densely dashed,thick,domain=11/9:4/3] {11-8*x};
        \addplot[color=teal,densely dashed,thick,domain=4/3:2] {1-x/2};

        \addplot[color=blue,densely dotted,thick,domain=7/6:4/3] {7-5*x};
        \addplot[color=blue,densely dotted,thick,domain=4/3:2] {1-x/2};

        \addplot[color=red,thick,domain=1:4/3] {3-2*x};
        \addplot[color=red,thick,domain=4/3:2] {1-x/2};

        \legend{$x\mapsto 1-x/2$,,$k=4$,,$k=3$,,$k=2$,,$k=1$}
    \end{axis}
  \end{tikzpicture}
  \caption{Budget thresholds for $T_k$.}
  \label{fig:T-k}
\end{figure*}

\section{Notation and Tools}

This section contains some lemmas that will be useful further in the proofs of the
main theorems.

We first make precise our notation. For a graph~$G$, we denote its vertex set
by~$V(G)$, its edge set by~$E(G)$, and we let~$e(G) = |E(G)|$. We usually assume
that, if~$G$ is a host graph with~$n$ vertices, then~$V(G) = [n]$. For a set of
vertices~$S$, the \defin{neighborhood} of~$S$ in~$G$ is the set of vertices
from~$V(G) \setminus S$ which are adjacent to a vertex from~$S$, and we denote this set
by~$N_G(S)$. When~$S = \set{v}$, we also write~$N_G(v)$. The \defin{degree} of a
vertex~$v$ in~$G$ is~$d_G(v) = |N_G(v)|$. We denote the path and the cycle on~$n$
vertices by~$P_n$ and~$C_n$ respectively, and a disjoint set of~$k$ edges
by~$kK_2$. We also denote by~$K_4^{-}$ the graph on four vertices and five edges,
and~$K_3^+$ as the graph on four vertices that contains a triangle and one additional edge.

Given two real valued functions~$f$ and~$g$,
if~$\lim_{n \to \infty} f(n) / g(n) = 0$, then we equivalently say
that~$f = o(g)$,~$f \ll g$,~$g = \omega(f)$, and~$g \gg f$.
If~$\lim_{n \to \infty} f(n) / g(n) < \infty$, then we write~$f = O(g)$,
and~$g = \Omega(f)$. If~$f = O(g)$ and~$f = \Omega(g)$, we then
write~$f = \Theta(g)$. We say that an event~$E = E(n)$ occurs \defin{with high
  probability}, or \whp for short if~$\Prob{E} = 1 - o(1)$. We will make some abuse
of notation when divisibility problems arise. For instance, we may say that a set has
size~$t / 3$, even if~$t$ is not divisible by~$3$.

Denote the graph generated by random process at time~$t$ by~$G_t$. The \defin{hitting time} for a
monotone graph property~$\cP$ is the minimum time~$t$ for which~$G_t$ has the
property~$\cP$. Denote by~$B_t$ the spanning subgraph of~$G_t$ that consists of the
edges that are purchased by Builder by time~$t$. A~\defin{$(t, b)$-strategy} of the
builder is a function that, for any~$1 \leq s \leq t$, given~$B_{s - 1}$ decides
whether to purchase the edge presented at time~$s$, under the limitation that~$B_s$
has at most~$b$ edges.

In the rest of the paper, we would use the following concentration inequalities.

\begin{lemma}[Chernoff bound]
  \label{lem:chernoff}
  If~$X$ is a binomial random variable, and~$0 < \eps < 1$, then
  \begin{equation*}
    \Prob{|X - \Expect X| \geq \eps \Expect X} \leq 2 \cdot \exp\paren{- \frac{\eps^2
        \Expect X}{3}}.
  \end{equation*}
\end{lemma}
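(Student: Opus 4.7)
The plan is to apply the classical Chernoff method, that is, Markov's inequality to the exponentiated random variable, and then reduce the resulting transcendental expression to the clean form~$\exp(-\eps^2 \Expect X / 3)$ by an elementary calculus estimate. I will handle the upper and lower tails separately and combine them by a union bound, which accounts for the factor of~$2$.

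Write~$X = \sum_{i=1}^n X_i$ as a sum of independent Bernoulli$(p)$ random variables, and set~$\mu = \Expect X = np$. For the upper tail, I would apply Markov's inequality to~$e^{\lambda X}$ for any~$\lambda > 0$, use independence to factor the moment generating function, and bound $\Expect e^{\lambda X_i} = 1 + p(e^\lambda - 1) \leq \exp(p(e^\lambda - 1))$. This yields
\begin{equation*}
  \Prob{X \geq (1 + \eps)\mu} \leq \exp\paren{\mu(e^\lambda - 1) - \lambda(1 + \eps)\mu}.
\end{equation*}
Optimizing by setting~$\lambda = \ln(1 + \eps)$ collapses this to the standard Cramér--Chernoff bound
\begin{equation*}
  \Prob{X \geq (1 + \eps)\mu} \leq \exp\paren{-\mu\bigl((1 + \eps)\ln(1 + \eps) - \eps\bigr)}.
\end{equation*}
For the lower tail, the symmetric argument with~$\lambda < 0$ gives an analogous bound with $(1 - \eps)\ln(1 - \eps) + \eps$ in the exponent, valid for~$0 < \eps < 1$.

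The remaining task, and the only place where the restriction~$\eps \leq 3/2$ enters, is to verify the elementary inequalities
\begin{equation*}
  (1 + \eps)\ln(1 + \eps) - \eps \geq \tfrac{\eps^2}{3} \quad \text{for } 0 < \eps \leq 3/2, \qquad (1 - \eps)\ln(1 - \eps) + \eps \geq \tfrac{\eps^2}{3} \quad \text{for } 0 < \eps < 1.
\end{equation*}
Both are standard calculus exercises: define~$f(\eps)$ as the difference of the two sides, check~$f(0) = 0$, and show~$f$ is non-negative on the given interval by differentiating and tracking the sign. Combining the two tail bounds via a union bound then yields the claimed inequality. The main---and only mild---obstacle is confirming that the constant~$1/3$ is indeed correct up to~$\eps = 3/2$ on the upper side; beyond that point the inequality fails and one would have to either weaken the constant or restrict the range, which is precisely why the hypothesis is stated as it is.
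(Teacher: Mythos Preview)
Your argument is the standard textbook derivation of the Chernoff bound and is correct; the calculus check that $(1+\eps)\ln(1+\eps)-\eps \geq \eps^2/3$ does indeed hold on $(0,3/2]$ (with equality approached near the right endpoint), and the lower-tail inequality in fact holds with the stronger constant $1/2$, so $1/3$ is safe there. For $1 \leq \eps \leq 3/2$ the lower tail is vacuous since $X \geq 0$, which you should mention explicitly.

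By way of comparison, the paper gives no proof at all: the lemma is stated as a standard tool and left unproved, as is customary for Chernoff-type bounds in probabilistic combinatorics papers. So your write-up is not competing with an alternative argument but rather supplying a proof where the paper simply cites the result.
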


\begin{lemma}[Bennett's inequality]
  \label{lem:bennett_inequality}
  Let~$X_1, \dots, X_T$ be independent random variables defined on a finite probability
  space, and having finite variances. Suppose that~$|X_i| \leq b$, for all
  $1 \leq i\leq T$ and some~$b > 0$.
  Let~$\sigma^2 = \sum_{i = 1}^T \Expect{(X_i - \Expect X_i)^2}$. Then, for every~$\ell \geq 0$,
  \begin{equation*}
    \Prob{\left|\sum_{i = 1}^T X_i - \Expect{X_i}\right| \geq \ell} \leq
    2\cdot \exp\paren{-\frac{\sigma^2}{b^2}h\paren{\frac{\ell b}{\sigma^2}}},
  \end{equation*}
  where~$h(x) = \paren{1 + x}\log \paren{1 + x} - x$, for~$x \geq 0$.
\end{lemma}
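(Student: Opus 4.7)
The plan is to use the exponential moment method (Chernoff's technique). Set $Y_i = X_i - \Expect X_i$, so the $Y_i$ are independent, mean zero, satisfy $|Y_i| \leq 2b$, and have $\sum_{i=1}^T \Expect{Y_i^2} = \sigma^2$. For any $\lambda > 0$, Markov's inequality applied to $\exp\paren{\lambda \sum_i Y_i}$ yields
\begin{equation*}
  \Prob{\sum_{i=1}^T Y_i \geq \ell} \leq e^{-\lambda \ell} \prod_{i=1}^T \Expect{e^{\lambda Y_i}},
\end{equation*}
and the task reduces to controlling each moment generating function and optimizing over $\lambda$.

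The crucial step is the pointwise bound
\begin{equation*}
  e^{\lambda y} \leq 1 + \lambda y + \frac{e^{\lambda b} - 1 - \lambda b}{b^2}\, y^2
\end{equation*}
valid for $y \leq b$. This can be seen either by Taylor expansion---the coefficient of $y^k$ on the right dominates $\lambda^k/k!$ times $b^{k-2}$ for each $k\geq 2$---or more cleanly by observing that $y \mapsto (e^{\lambda y} - 1 - \lambda y)/y^2$ is nondecreasing. Taking expectations and invoking $\Expect Y_i = 0$ gives $\Expect{e^{\lambda Y_i}} \leq 1 + \sigma_i^2(e^{\lambda b} - 1 - \lambda b)/b^2$. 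Using $1 + x \leq e^x$ and independence then produces
\begin{equation*}
  \prod_{i=1}^T \Expect{e^{\lambda Y_i}} \leq \exp\paren{\frac{\sigma^2}{b^2}\paren{e^{\lambda b} - 1 - \lambda b}}.
\end{equation*}

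Finally I would optimize the exponent $-\lambda \ell + (\sigma^2/b^2)(e^{\lambda b} - 1 - \lambda b)$ over $\lambda > 0$. Differentiating shows the minimum is attained at $\lambda^{\ast} = b^{-1} \log\paren{1 + \ell b/\sigma^2}$, and substituting back collapses the expression to exactly $-(\sigma^2/b^2)\, h(\ell b/\sigma^2)$. This establishes the upper tail; applying the same argument to the variables $-X_i$ and taking a union bound delivers the two-sided statement with the leading factor of $2$.

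The principal technical obstacle is the MGF estimate: it is tempting to use a crude bound like Hoeffding's $\Expect{e^{\lambda Y_i}} \leq \exp(\lambda^2 b^2/2)$, but doing so loses the variance information that is essential for Bennett's inequality to be sharp in the small-$\ell$ regime. A minor wrinkle is that the hypothesis $|X_i| \leq b$ only yields $|Y_i| \leq 2b$ rather than $|Y_i| \leq b$; one absorbs this either into a constant, or by working with the one-sided bound $Y_i - \Expect Y_i \leq b$ which some references assume directly.
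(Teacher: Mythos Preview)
The paper does not prove this lemma; Bennett's inequality is listed in Section~2 alongside the Chernoff bound as a standard concentration tool and is quoted without proof or reference. So there is no ``paper's own proof'' to compare against.

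Your argument is the standard textbook proof and is correct in its essentials: the exponential moment method, the key pointwise inequality
\[
  e^{\lambda y} \leq 1 + \lambda y + \frac{e^{\lambda b} - 1 - \lambda b}{b^2}\, y^2 \qquad (y \leq b),
\]
justified by the monotonicity of $y \mapsto (e^{\lambda y} - 1 - \lambda y)/y^2$, and the optimization at $\lambda^\ast = b^{-1}\log(1 + \ell b/\sigma^2)$ are all right, and the substitution does produce $-(\sigma^2/b^2)\,h(\ell b/\sigma^2)$ exactly. The two-sided bound via $-X_i$ and a union bound is also fine.

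The one genuine loose end is the $b$ versus $2b$ issue you flag at the end. As stated in the paper the hypothesis is $|X_i| \leq b$, which only gives $Y_i \leq 2b$, and plugging $2b$ through your argument yields $-\tfrac{\sigma^2}{4b^2}\,h(2\ell b/\sigma^2)$ rather than the displayed bound. You cannot simply ``absorb this into a constant'': the two expressions are not related by a universal multiplicative factor across all regimes of $\ell b/\sigma^2$. The clean fix is the one you hint at --- the pointwise MGF bound only needs the one-sided condition $Y_i \leq b$, and from $|X_i| \leq b$ one has $X_i - \Expect X_i \leq b - \Expect X_i$; if in the applications $\Expect X_i \geq 0$ (as is the case everywhere Bennett is invoked in this paper, since each $X_i$ is a nonnegative count) then $Y_i \leq b$ holds and your proof goes through verbatim. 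If you want the lemma exactly as stated in full generality, you would need to weaken the conclusion to accommodate the factor of $2$, or strengthen the hypothesis to $X_i - \Expect X_i \leq b$.
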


We will use the following two well known lemmas to prove an auxiliary lemma about the
containment of two particular small graphs in a subgraph of the random graph~$G_t$.

\begin{lemma}
  \label{lem:model_switching}
  Let~$\cP$ be an increasing graph property. Suppose that~$f(n) \to \infty$
  and~$\sqrt{p} n \to \infty$ as~$n \to \infty$.
  If~$T = T(n) \geq p \binom{n}{2} + f(n)\sqrt{p} n$,
  then~$\Prob{G_T \in \cP} \geq \Prob{G(n, p) \in \cP} - o(1)$.
\end{lemma}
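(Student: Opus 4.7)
The plan is to apply a standard coupling between the random graph process $G_T$ and the binomial random graph $G(n,p)$, and then use the concentration of $|E(G(n,p))|$ around its mean $p\binom{n}{2}$. Let $(e_1, \dots, e_N)$ be a uniformly random ordering of the edges of $K_n$ and, independently, let $M \sim \mathrm{Bin}\paren{\binom{n}{2}, p}$. Define $G_T$ as the graph with edge set $\set{e_1, \dots, e_T}$ and $H$ as the graph with edge set $\set{e_1, \dots, e_M}$. Since $G(n,p)$ conditioned on having $m$ edges is uniform over graphs with $m$ edges, $H$ has the same distribution as $G(n,p)$. Crucially, on the event $\set{M \leq T}$ we have $E(H) \subseteq E(G_T)$, and because $\cP$ is an increasing graph property, $H \in \cP$ then forces $G_T \in \cP$.

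Next I would bound $\Prob{M > T}$ by a routine second-moment estimate. We have $\Expect M = p\binom{n}{2}$ and $\mathrm{Var}(M) = \binom{n}{2} p(1-p) \leq pn^2/2$, so Chebyshev's inequality together with the hypothesis $T \geq p\binom{n}{2} + f(n)\sqrt{p}\,n$ yields
\[
\Prob{M > T} \leq \Prob{M - \Expect M \geq f(n)\sqrt{p}\, n} \leq \frac{pn^2/2}{f(n)^2 \, p n^2} = \frac{1}{2 f(n)^2} = o(1),
\]
using $f(n) \to \infty$.

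Combining the two observations,
\[
\Prob{G_T \in \cP} \geq \Prob{H \in \cP \text{ and } M \leq T} \geq \Prob{H \in \cP} - \Prob{M > T} = \Prob{G(n,p) \in \cP} - o(1),
\]
which is exactly the desired inequality. There is no substantial obstacle here: the argument is just a coupling followed by binomial concentration, with the additive buffer $f(n)\sqrt{p}\, n$ being precisely a diverging multiple of the standard deviation of $M$, which is exactly what Chebyshev needs. The hypothesis $\sqrt{p}\,n \to \infty$ only ensures that this buffer (and the mean of $M$) are large enough that the resulting estimate is non-trivial.
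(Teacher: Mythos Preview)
Your proof is correct and is precisely the standard coupling argument for comparing $G_T$ with $G(n,p)$. The paper does not actually supply a proof of this lemma; it is stated there as one of two ``well known lemmas'' and used without justification, so there is nothing to compare against beyond noting that your argument is the expected one.
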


\begin{lemma}
  \label{lem:threshold_graph_containment}
  Let~$G \sim G(n, p)$, with~$p = p(n)$, and let~$k \geq 1$ be an integer. Then the
  following statements hold \whp
  \begin{enumerate}[(i)]
  \item If~$p = \omega\paren{n^{-3/2}}$, then~$P_3 \subset G$.
  \item If~$p = \omega\paren{n^{-2}}$, then~$kK_2 \subset G$.
  \end{enumerate}
\end{lemma}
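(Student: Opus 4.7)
The plan is to prove both items by the second moment method. Let~$X_H$ denote the number of copies of~$H$ in~$G \sim G(n, p)$. A direct count gives~$\Expect X_{P_3} = \Theta(n^3 p^2)$ and~$\Expect X_{kK_2} = \Theta(n^{2k} p^k)$, and the hypotheses on~$p$ in~(i) and~(ii) are exactly those needed for these expectations to tend to infinity. By Chebyshev's inequality, it therefore suffices to show in each case that~$\Expect X_H^2 = (1 + o(1))(\Expect X_H)^2$.

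For~(i), I would group ordered pairs of copies of~$P_3$ by their overlap pattern---disjoint, sharing one vertex, sharing two vertices but no edge, sharing one edge, or identical. The disjoint pairs contribute~$(1 + o(1))(\Expect X_{P_3})^2$, and each of the other classes contributes~$\Theta(n^a p^b)$ whose ratio to~$(\Expect X_{P_3})^2 = \Theta(n^6 p^4)$ is~$o(1)$ under~$p = \omega(n^{-3/2})$. The binding case is that of two identical copies, giving a ratio of~$\Theta((n^3 p^2)^{-1})$, which is exactly what the hypothesis on~$p$ kills; this reflects the fact that~$P_3$ is strictly balanced.

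For~(ii), I would classify ordered pairs~$(M, M')$ of~$k$-matchings by the number~$j \in \set{0, 1, \dots, k}$ of edges they share. A pair sharing~$j$ edges spans~$4k - 2j$ vertices and~$2k - j$ edges in total, so contributes~$\Theta(n^{4k - 2j} p^{2k - j})$ to~$\Expect X_{kK_2}^2$. The ratio to~$(\Expect X_{kK_2})^2 = \Theta(n^{4k} p^{2k})$ is~$\Theta((n^2 p)^{-j})$, which is~$o(1)$ for every~$j \geq 1$ under~$p = \omega(n^{-2})$, as~$k$ is fixed. Summing the~$k + 1$ classes yields the desired second moment estimate.

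The argument is essentially routine: the lemma is an instance of the standard subgraph-threshold machinery, and the sparse structure of~$P_3$ and~$kK_2$ makes the overlap enumeration short. The only place where any care is needed is the bookkeeping of combinatorial coefficients in the overlap classes---in particular, the symmetry factors for unordered copies, and in~(ii) correctly counting the choice of which~$j$ edges are shared together with two disjoint matchings of size~$k - j$. I do not foresee any genuine obstacle.
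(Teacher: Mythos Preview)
Your proof is correct and follows the standard second-moment route. The paper itself does not prove this lemma: it is listed among ``two well known lemmas'' and invoked without argument in the proof of \Cref{lem:threshold_graph_Gnm}, so there is no proof in the paper to compare against. What you outline is precisely the textbook argument that justifies such a citation.

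One minor point in~(ii): when you write that a pair of $k$-matchings sharing $j$ edges ``spans $4k - 2j$ vertices'', this is only the generic case---the non-shared edges of $M$ and $M'$ may overlap in vertices without sharing an edge. This does not affect your conclusion, since such configurations are lower order in $n$ and the number of ordered pairs with exactly $j$ common edges is still $\Theta(n^{4k-2j})$; just make sure your write-up accounts for it.
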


Based on the previous two lemmas we can establish the threshold for $P_3$ and $kK_2$ in $G_T$.

\begin{lemma}
  \label{lem:threshold_graph_Gnm}
  Let~$G \sim G_T$ be a random graph with~$n$ vertices and~$T$ edges, and
  let~$N \subset V(G)$ be a set of vertices of~$G$. Let~$k \geq 1$ be an integer. Then,
  \begin{enumerate}[(i)]
  \item If~$\paren{T / n^2} \cdot |N|^{3 / 2} = \omega\paren{1}$, then \whp~$P_3
      \subset G[N]$.
  \item If~$\paren{T / n^2} \cdot |N|^2 = \omega\paren{1}$, then \whp~$kK_2 \subset G[N]$.
  \end{enumerate}
\end{lemma}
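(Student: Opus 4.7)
The approach is to reduce each statement to Lemma~\ref{lem:threshold_graph_containment} via the model-switching Lemma~\ref{lem:model_switching}, after first controlling how many edges of $G$ actually land inside $N$. I would let $X$ denote the number of edges of $G$ with both endpoints in $N$. Since $G \sim G_T$ is the uniform random graph with $T$ edges, $X$ is hypergeometrically distributed with mean $\mu := T\binom{|N|}{2}/\binom{n}{2} = \Theta(T|N|^2/n^2)$. In both parts, the hypothesis forces $\mu = \omega(1)$, and since $T = O(n^2)$ it also forces $|N| \to \infty$. By the standard Chernoff bound for hypergeometric sums, $X \geq \mu/2$ \whp.

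Conditional on $X = T'$, the subgraph $G[N]$ is uniformly distributed among graphs on $|N|$ vertices with $T'$ edges, so it suffices to show that $G_{T'}$ on $|N|$ vertices contains $P_3$ (respectively $kK_2$) \whp for every $T' \geq \mu/2$. I would then apply Lemma~\ref{lem:model_switching} with $p := T'/(2\binom{|N|}{2})$, so that the required inequality $T' \geq p\binom{|N|}{2} + f(|N|)\sqrt{p}\,|N|$ becomes $T'/2 \geq \Theta(f(|N|)\sqrt{T'})$, i.e.\ $\sqrt{T'} \geq \Theta(f(|N|))$. Choosing $f(|N|)$ to grow slowly enough makes this hold, since $T' \to \infty$; moreover $\sqrt{p}\,|N| = \Theta(\sqrt{T'}) \to \infty$, so both hypotheses of Lemma~\ref{lem:model_switching} are satisfied.

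Finally, I would invoke Lemma~\ref{lem:threshold_graph_containment} for $G(|N|, p)$. In part~(i), $T' \geq \mu/2 = \omega(|N|^{1/2})$, which gives $p = \Theta(T'/|N|^2) = \omega(|N|^{-3/2})$, so $P_3 \subset G(|N|, p)$ \whp. In part~(ii), $T' = \omega(1)$, so $p = \omega(|N|^{-2})$, and $kK_2 \subset G(|N|, p)$ \whp. Lemma~\ref{lem:model_switching} then transfers these containments to $G_{T'}$ on $|N|$ vertices, and hence to $G[N]$, completing the proof. The only delicate point is the concentration of the hypergeometric $X$ when $\mu$ grows slowly; this can be handled either by quoting the standard Chernoff bound for hypergeometric variables or, equivalently, by a Chebyshev estimate using $\mathrm{Var}(X) = O(\mu)$.
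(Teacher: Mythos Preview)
Your argument is correct, but it takes a longer detour than the paper's own proof. You first restrict to $N$, control the (hypergeometric) edge count $X$ inside $N$, condition on $X=T'$, and only then apply Lemma~\ref{lem:model_switching} on the $|N|$-vertex graph $G_{T'}$. The paper instead applies Lemma~\ref{lem:model_switching} once on the full $n$-vertex graph: with $p=T/n^{2}$ one has $T\ge 2p\binom{n}{2}$, so the increasing event $\{P_3\subset G[N]\}$ (resp.\ $\{kK_2\subset G[N]\}$) in $G_T$ dominates the same event in $G(n,p)$ up to $o(1)$; and since $G(n,p)[N]$ is distributed exactly as $G(|N|,p)$, Lemma~\ref{lem:threshold_graph_containment} applies directly with $p=T/n^{2}$. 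This completely bypasses the hypergeometric concentration step and the need to re-run the model switching on the smaller vertex set. Your route works and is self-contained, but the paper's order of operations (switch first, then restrict) is cleaner because restriction commutes trivially with the binomial model while it does not with the uniform model.
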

\begin{proof}
  With~$p = T / n^2$, we have that~$T \geq 2p\binom{n}{2}$. Observe that the two stated
  properties are increasing. Hence, by \Cref{lem:model_switching}, we have that
  \begin{equation*}
    \Prob{P_3 \subset G[N]} \geq \Prob{P_3 \subset G(n, p)[N]} - o(1).
  \end{equation*}
  Clearly, the graph~$H = G[N]$ has distribution~$G(|N|, p)$. Then, by
  \Cref{lem:threshold_graph_containment},
  if~$p = T / n^2 = \omega\paren{|N|^{-3/2}}$, then \whp~$P_3 \subset H$, and
  therefore~$P_3 \subset G[N]$.

  Similarly, if~$p = T / n^2 = \omega{(|N|^{-2})}$, then \whp~$kK_2 \subset H$, and
  therefore~$kK_2 \subset G[N]$.
\end{proof}

Finally, we establish likely properties arising from random processes.

\begin{lemma}
  \label{lem:degree_concentration}
  Let~$T \geq 1$ be an integer, and let~$C > 1$. Then \whp every vertex~$v$ of
  the~$n$-vertex random graph with~$T$ edges~$G_T$
  satisfies~$T / Cn \leq d_{G_T}(v) \leq CT / n$.
\end{lemma}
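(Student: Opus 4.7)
The plan is to switch from $G_T$ to $G(n,p)$ with a slightly smaller density and then apply a Chernoff bound to each vertex's degree. The property $\cP = \set{G : \delta(G) \geq T/(2n)}$ is monotone increasing. Taking $p = T/n^2$, one has $p\binom{n}{2} = (1 - 1/n)T/2 \leq T/2$ and $\sqrt{p}\,n = \sqrt{T}$, so the hypothesis
\[
T \geq p\binom{n}{2} + f(n)\sqrt{p}\,n
\]
of Lemma~\ref{lem:model_switching} holds with $f(n) = \log n$ as soon as $T \gg (\log n)^2$. The conclusion of that lemma then gives
\[
\Prob{G_T \in \cP} \geq \Prob{G(n,p) \in \cP} - o(1),
\]
and it suffices to verify the minimum-degree bound in the binomial model.

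In $G(n,p)$ the degree $d(v)$ of any fixed vertex $v$ is distributed as $\mathrm{Bin}(n-1, p)$ with mean $(n-1)p = (T/n)(1 - o(1))$. Since $T/(2n) \leq (1/2 + o(1))\Expect{d(v)}$, Lemma~\ref{lem:chernoff} applied with $\eps = 1/2 - o(1)$ yields
\[
\Prob{d(v) \leq T/(2n)} \leq 2 \exp\paren{-\frac{T}{13n}},
\]
say, for $n$ large enough. A union bound over the $n$ vertices then bounds the overall failure probability by $2n\exp(-T/(13n))$, which is $o(1)$ whenever $T/n \gg \log n$. This condition is satisfied in every regime in which this lemma is later invoked; when the bound $T/(2n)$ is $O(1)$, the claim is vacuous anyway.

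The only mild subtlety is to choose $p$ strictly below the actual edge density $T/\binom{n}{2}$ of $G_T$ so that the additive slack $f(n)\sqrt{p}\,n$ in Lemma~\ref{lem:model_switching} is absorbed into the gap $T - p\binom{n}{2}$; the choice $p = T/n^2$ accomplishes this comfortably. After the model switch, what remains is a routine Chernoff plus union bound argument, and no step presents a genuine obstacle.
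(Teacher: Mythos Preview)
The paper does not prove this lemma; it is stated without proof as a standard concentration fact. Your argument via model switching followed by Chernoff plus a union bound is a perfectly valid route and uses exactly the tools the paper has set up (Lemmas~\ref{lem:chernoff} and~\ref{lem:model_switching}). In the regime $T \gg n\log n$ your proof is correct, and you rightly observe that every invocation of the lemma in the paper has $T \gg n^{1+\eps}$ for some $\eps > 0$, so this suffices.

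One small correction: your remark that the claim is ``vacuous'' when $T/(2n) = O(1)$ is not accurate. If, say, $T = 10n$, the statement asks for minimum degree at least $5$, which is certainly not vacuous and in fact fails \whp\ (for $T \leq (1/2 - \eps)n\log n$ there are isolated vertices \whp). So the lemma as literally stated, with the hypothesis ``$T \geq 1$'', is simply false for small $T$; this is a defect in the paper's statement rather than in your argument, and since the lemma is only applied with $T$ well above $n\log n$ it does no harm. It would be cleaner to add the hypothesis $T \gg n\log n$ to the lemma.

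An alternative, slightly more direct, proof would observe that $d_{G_T}(v)$ is hypergeometric with mean $2T/n$ and apply a hypergeometric Chernoff bound directly, bypassing the model switch. Given that the paper's Lemma~\ref{lem:chernoff} is stated only for binomial variables, your detour through $G(n,p)$ is the natural choice with the tools at hand.
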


\begin{lemma}
  \label{claim:sample_T_at_most_5_times}
  Consider~$T = O(n^{7 / 5})$ uniform and independent samples of elements from the
  set~$\binom{[n]}{2}$. Then, \whp every member of~$\binom{[n]}{2}$ was sampled at
  most~$5$ times.
\end{lemma}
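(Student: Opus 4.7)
The plan is to prove this by a straightforward first moment / union bound argument. For each fixed pair $e \in \binom{[n]}{2}$, let $X_e$ denote the number of samples that equal $e$. Since the samples are uniform and independent, $X_e$ is distributed as $\operatorname{Bin}\!\bigl(T, 1/\binom{n}{2}\bigr)$, and in particular, for each fixed choice of $6$ sample indices, the probability that all six of them coincide with $e$ is exactly $\bigl(1/\binom{n}{2}\bigr)^6 \leq (2/n^2)^6$.

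The key estimate is then obtained by union-bounding over the $\binom{T}{6}$ choices of $6$ sample indices:
\begin{equation*}
  \Prob{X_e \geq 6} \leq \binom{T}{6}\cdot \paren{\frac{2}{n^2}}^6 = O\!\paren{\frac{T^6}{n^{12}}}.
\end{equation*}
With the hypothesis $T = O(n^{7/5})$ we have $T^6 = O(n^{42/5})$, and hence $\Prob{X_e \geq 6} = O(n^{42/5 - 12}) = O(n^{-18/5})$.

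A second union bound, now over the $\binom{n}{2} = O(n^2)$ possible pairs $e$, gives
\begin{equation*}
  \Prob{\exists\, e \in \tbinom{[n]}{2}:\, X_e \geq 6} = O\!\paren{n^{2} \cdot n^{-18/5}} = O\!\paren{n^{-8/5}} = o(1),
\end{equation*}
which is exactly the claim. No real obstacle is anticipated; the only delicate point is that the exponent $7/5$ in the hypothesis is precisely what makes this naive bound work — one must check that $6 \cdot (7/5) - 12 + 2 < 0$, which holds with room to spare. No concentration inequality beyond the crude $\binom{T}{6} p^6$ bound is needed.
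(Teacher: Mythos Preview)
Your proof is correct and essentially identical to the paper's own argument: fix a pair $e$, bound $\Prob{X_e \geq 6}$ by $\binom{T}{6}\bigl(1/\binom{n}{2}\bigr)^6 = O(n^{-18/5})$, and then union-bound over the $O(n^2)$ pairs to get $O(n^{-8/5}) = o(1)$. The only difference is cosmetic phrasing.
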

\begin{proof}
  Let~$e_1, \dots, e_T$ be the random sampled pairs. For a fixed
  pair~$e \in \binom{[n]}{2}$, we have that, for all~$i = 1, \dots,
  T$,~$\Prob{e = e_i} = 1 / \binom{n}{2} = O\paren{n^{-2}}$. Clearly these events are
  independent. Thus, the probability that there is a set~$I \subset [T]$ of
  size~$|I| \geq 6$ such that~$e = e_j$ for all~$j \in I$ is at
  most~$\binom{T}{6} \Prob{e = e_1}^6 = O\paren{T^6 / n^{12}} = O\paren{n^{-18 /
      5}}$, where we have used the fact that~$T = O\paren{n^{7 / 5}}$. Therefore,
  taking a union bound over all elements in~$\binom{[n]}{2}$, the probability that
  some pair~$e \in \binom{[n]}{2}$ satisfies the above property is at
  most~$n^2 \cdot O(n^{-18 / 5}) = O(n^{-8 / 5}) = o(1)$.
\end{proof}

\begin{claim}
  \label{claim:every_edge_in_at_most_c_Ns}
  For any~$\eps > 0$, there exists~$c_\eps$ such that, if~$t = O\paren{n^{3 / 2} - \eps}$, then \whp the random graph~$G_t$ satisfies that for every set~$v_i$ of its vertices, every pair of vertices of the graph is contained in the neighborhood of at most~$c_\eps$ of the vertices~$v_i$.
\end{claim}
\begin{claimproof}
  Let~$s$ be the maximum degree of~$G_t$. We have that \whp $s = O(t / n)$. Choose $c_\eps$ large enough so that $\eps(c_\eps + 1) \geq 3$. For a
  pair~$e' \in \binom{[n] \setminus \set{v_i}}{2}$,
  \begin{equation*}
    \Prob{e' \subset N_{G^t}(v_i)} = \frac{\binom{s}{2}}{\binom{n - 1}{2}} \leq
    \frac{4s^2}{n^2} \leq O\paren{\frac{t^2}{n^4}}.
  \end{equation*}
  Also, given that any pair of vertices that contains~$v_i$ cannot belong
  to~$N_{G^t}(v_i)$, we have that for every pair~$e \in
  \binom{[n]}{2}$,~$\Prob{e \subset N_{G^t}(v_i)} = O(t^2 / n^{4})$. Note also that,
  for~$j \neq i$, the events~$e \in N_{G^t}(v_i)$ and~$e \in N_{G^t}(v_j)$ are
  independent. By taking the union bound over all choices of $c_\eps + 1$ vertices, the probability that there is a set~$I \subset [n]$
  with~$|I| \geq c_\eps + 1$ such that~$e \subset N_{G^t}(v_j)$ for all~$j \in I$ is at most
  \begin{equation*}
    \binom{n}{c_\eps + 1} \Prob{e \in N_1}^{c_\eps + 1} \leq O\paren{\frac{n^{c_\eps + 1} t^{2(c_\eps + 1)}}{n^{4(c_\eps + 1)}}} =
    O\paren{\frac{n^{c_\eps + 1} n^{3(c_\eps + 1) - \eps(c_\eps + 1)}}{n^{4(c_\eps + 1)}}} \leq o(n^{-3}),
  \end{equation*}
  where we have made a substitution for $t$ and the inequality for $c_\eps$.  Therefore, taking a union bound over all edges
  in~$\binom{[n]}{2}$, yields the probability that some pair~$e \in \binom{[n]}{2}$ is
  contained in at least~$c_\eps + 1$ sets~$N_{G^t}(v_j)$ is at
  most~$n^2 \cdot o(n^{-3}) = o(n^{-1}) = o(1)$.
\end{claimproof}

\section{Proof of Theorem~\ref{thm:k4-}}

\subsection{\texorpdfstring{$1$}{1}-statement}

Let~$t \gg n^{6 / 5}$, and let~$b \gg b^*$,
where~$b^* = \max\set{n^{6} / t^{4}, n^{4 / 3} / t^{2 / 3}}$.

\noindent \textbf{Short time with large budget.} We first consider the case
where~$n^{7 / 6} \ll t = O\paren{n^{7 / 5}}$. In this case we have
that~$b \gg b^* = n^6 / t^4$.  Let~$T = t / 3$, and let~$r = \omega\paren{n^7 / T^5}$ such
that~$rt / n = o(b)$.

Roughly speaking, the strategy in this case is to select a set of~$r$ vertices and
then reveal the first~$T$ edges of the graph, buying every edge that intersects these
selected vertices as long as the neighborhoods have size at most~$\Theta(t / n)$. Thus, we
obtain a set of neighborhoods which cover a large number of edges of the graph,
having purchased at most~$o(b)$ edges. We then reveal the next~$T$ edges and buy at
most~$b / 2$ edges contained in these neighborhoods,
obtaining~$\omega\paren{n^2 / T}$ triangles. In the last step, we reveal the
remaining~$T$ edges, at least one of which will, with high probability, complete the
graph~$K_4^-$. We now formalize this strategy.

Let~$R$ be an arbitrary set of~$r$ vertices of the graph~$G_0$. The strategy in this
case is to reveal the first~$T$ edges and buy every edge that
intersects~$R$. Let~$G^R$ be the graph obtained as a result of this step. By
\Cref{lem:degree_concentration}, \whp every vertex~$v \in R$
satisfies~$T / 3n \leq d_{G_T}(v) \leq 3T / n$. Hence, the graph~$G^R$ \whp contains
at most~$r \cdot 3T / n = o(b)$ edges. We also have that~$d_{G^R}(v) \geq T / 3n$. We
now make the following claim.

\begin{claim}
  \label{claim:every_edge_in_at_most_10_Ns}
  With high probability, every pair~$e \in \binom{[n]}{2}$ is contained in at
  most~$10$ of the sets~$N_{G^R}(v_i)$.
\end{claim}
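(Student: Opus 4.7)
The plan is to prove the claim by a first-moment union bound, working in the host graph $G_T$ rather than in $G^R$ itself. The key preliminary observation is that since $G^R \subseteq G_T$ by construction, we have $N_{G^R}(v) \subseteq N_{G_T}(v)$ for every $v \in R$; it therefore suffices to prove the (formally stronger) statement that \whp every pair $e \in \binom{[n]}{2}$ is contained in at most~$10$ of the sets $N_{G_T}(v_i)$, $v_i \in R$.

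For the main estimate, I would fix a pair $e = \{x, y\}$ and an $11$-element subset $S \subseteq R$. The event that $e \subseteq N_{G_T}(v)$ for every $v \in S$ is precisely the event that $22$ specific, distinct edges of $K_n$ all appear in $G_T$; since $G_T$ is uniformly distributed over $T$-edge graphs on $[n]$, this probability is at most $\paren{2T/n^2}^{22}$. A union bound over the $O(n^2)$ choices of $e$ and the at most $r^{11}$ choices of $S$ then yields an overall failure probability bounded by
\begin{equation*}
  O\paren{n^2 \cdot r^{11} \cdot \paren{T/n^2}^{22}} = O\paren{r^{11} T^{22}/n^{42}}.
\end{equation*}

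To close the bound I would substitute the admissible range for $r$. The constraints $r = \omega(n^7/T^5)$ and $rt/n = o(b)$, combined with $b = \omega(n^6/t^4)$ in this subcase, are compatible precisely when one may choose $r = h \cdot n^7/T^5$ for some $h = \omega(1)$ that grows arbitrarily slowly. Substituting gives $O(h^{11} \cdot n^{35}/T^{33})$, and the hypothesis $t \gg n^{7/6}$ gives $T^{33} \gg n^{77/2}$, so the whole expression is $O(h^{11}/n^{7/2}) = o(1)$ as long as $h$ grows slowly enough (any polylogarithmic choice suffices).

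I do not expect a major obstacle: this is a routine first-moment computation. The number $10$ appears simply because it is large enough to ensure that the exponent $(rp^2)^{11}$ in the union bound dominates the $n^2$ factor from the pair count in the tightest regime $t$ close to $n^{7/6}$; the slack between $n^{7/6}$ and the theorem's lower bound $n^{6/5}$ suggests that a smaller constant would already work, but $10$ is convenient and costs nothing.
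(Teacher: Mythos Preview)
Your argument is correct and follows the same first-moment union-bound skeleton as the paper: fix a pair $e$ and an $11$-subset $S\subseteq R$, bound the probability that $e$ lies in all eleven neighbourhoods by $O\!\paren{(t/n^2)^2}^{11}$, and union-bound over $e$ and $S$. The one genuine difference is where you compute the probability. The paper works directly in $G^R$, arguing by symmetry that each $N_{G^R}(v_i)$ is a uniformly random subset of size $s = O(t/n)$ and asserting that the events $\{e\subseteq N_{G^R}(v_i)\}$ are independent across $i$; this independence claim is plausible but a little delicate, since all the capped neighbourhoods are carved out of the same random permutation. You sidestep that issue entirely by passing to the host graph $G_T\supseteq G^R$, where the event ``$e\subseteq N_{G_T}(v)$ for all $v\in S$'' is exactly the containment of $22$ prescribed edges in a uniform $T$-edge graph, and the bound $(O(T/n^2))^{22}$ is immediate. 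The final numerics are also closed slightly differently---the paper plugs in the upper constraint $r=o(bn/t)$ together with $b,t=O(n^{7/5})$, whereas you plug in the lower constraint $r=h\cdot n^7/T^5$ together with $t\gg n^{7/6}$---but both routes land on the same $o(1)$ bound, and since in the $1$-statement we are free to fix $r$, your choice is perfectly legitimate.
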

\begin{claimproof}
  Fix~$i = 1, \dots, r$. Observe that, for a given
  set~$Q \subset [n] \setminus \set{v_i}$ of size~$k$ we have
  that~$\Prob{N_{G^R}(v_i) = Q}$ is the same for all choices of~$Q$ of size $k$. Then,
  if~$s = |N_{G^R}(v_i)| = O (T / n) = O(t / n)$, for a
  pair~$e' \in \binom{[n] \setminus \set{v_i}}{2}$,
  \begin{equation*}
    \Prob{e' \subset N_{G^R}(v_i)} = \frac{\binom{s}{2}}{\binom{n - 1}{2}} \leq
    \frac{4s^2}{n^2} \leq O\paren{\frac{t^2}{n^4}}.
  \end{equation*}
  Also, given that any pair of vertices that contains~$v_i$ cannot belong
  to~$N_{G^R}(v_i)$, we have that for every pair~$e \in
  \binom{[n]}{2}$,~$\Prob{e \subset N_{G^R}(v_i)} = O(t^2 / n^{4})$. Note also that,
  for~$j \neq i$, the events~$e \in N_{G^R}(v_i)$ and~$e \in N_{G^R}(v_j)$ are
  independent. Thus, the probability that there is a set~$I \subset [r]$
  with~$|I| \geq 11$ such that~$e \subset N_{G^R}(v_j)$ for all~$j \in I$ is at most
  \begin{equation*}
    \binom{r}{11} \Prob{e \in N_1}^{11} \leq O\paren{\frac{r^{11} t^{22}}{n^{44}}} \leq
    o\paren{\frac{b^{11}t^{11}}{n^{33}}} \leq o(n^{-11 / 5}),
  \end{equation*}
  where we have used the fact that~$r = o\paren{bn / t}$,
  and~$b, t = O\paren{n^{7 / 5}}$.  Therefore, taking a union bound over all elements
  in~$\binom{[n]}{2}$, the probability that some pair~$e \in \binom{[n]}{2}$ is
  contained in at least~$11$ sets~$N_{G^R}(v_j)$ is at
  most~$n^2 \cdot o(n^{-11 / 5}) = o(n^{-1/5}) = o(1)$.
\end{claimproof}

We proceed by revealing the next~$T$ edges and buy every edge contained in at least
one set~$N_{G^R}(v_i)$, as long as not more than~$b / 2$ edges are bought during this
step. Let~$G^T$ be the graph obtained as a result of this process.
Thus,~$G^R \subset G^T$.

Let~$X_1, \dots, X_T$ be independent copies of a random variable that, given a uniformly
at random pair~$e$ from~$\binom{[n]}{2}$, it counts the number of vertices~$v \in R$
such that~$e \subset N_{G^R}(v)$, and let~$X = X_1 + \cdots + X_T$. We have
that~$\Expect{X_1} = \Theta\paren{r\cdot (T / 3n)^2 / n^2} = \Theta\paren{rT / n^4} = \omega \paren{n^3
  / T^3}$.
Hence,~$\mu = \Expect{X} = T \cdot \Expect{X_1} = \omega\paren{n^3 / T^2}$. We claim that \whp
~$X \geq \mu / 2$. Indeed, by \Cref{claim:every_edge_in_at_most_10_Ns}~$X_i$ \whp is at
most~$10$, and~$\Prob{X_i \geq 1} \leq \Expect{X_i} = o(1)$. Therefore, we have
that~$\sigma^2 = \sum_{i = 1}^T \Expect{(X_i - \Expect X_i)^2} \leq 100\mu (1 + o(1))$. Hence, by using
\Cref{lem:bennett_inequality} with~$\ell = \mu / 2$, we have
that~$\Prob{X \geq \mu / 2} = o(1)$.

By using \Cref{lem:bennett_inequality}, if~$E \subset \binom{[n]}{2}$ be a set of~$T$ pairs
of vertices, and if, for a randomly with uniform probability chosen
pair~$e \in \binom{[n]}{2}$,~$Y_1, \dots, Y_T$ are independent copies of a random
variable defined
as~$\mathbbm{1}_{\set{e \in E}} \cdot |\set{v \in R : e \subset N_{G^R}(v)}|$, then,
\whp~$Y_1 + \cdots + Y_T = o(\mu)$. Let~$e_1, \dots, e_T$ be the random edges revealed in the
second step of the construction to obtain~$G^T$. For each~$e_i$, let~$Z_i$ be the
random variable that counts the number of vertices~$v \in R$ such
that~$e_i \subset N_{G^R}(v)$. Note that, in the case that one of these edges,~$e_j$ is
such that~$Z_j \geq 2$, then~$K_4^- \subset G^T$. Therefore, we assume
that~$Z_i \in \set{0, 1}$. We define~$a = e(G^T) - e(G^R)$. By
\Cref{claim:sample_T_at_most_5_times}, \whp every pair can be sampled at most~$5$
times. Then, using \Cref{claim:every_edge_in_at_most_10_Ns}, in the case
that~$a \leq b / 2$, i.e., we were not limited by the budget during this step, we can
see that each pair of vertices is \whp overcounted at most~$5 \times 10 = 50$ times
in~$a$.
Thus,~$a = \sum_{i = 1}^T Z_i \geq \frac{1}{50} \paren{\sum_{i = 1}^T X_i} - \sum_{i = 1}^T Y_i \geq
\mu / 50 - o(\mu) \geq \mu / 100 = \omega\paren{n^3 / T^2}$. In the case
that~$a = b / 2$, i.e., we purchased all~$b / 2$ edges during this step, we have
that~$a = \omega\paren{n^6 / t^4} = \omega\paren{n^3 / T^2}$, given
that~$t \leq O\paren{n^{7 / 5}} = o\paren{n^{3 / 2}}$. In both cases we
have~$a = \omega\paren{n^3 / T^2}$.

Observe that, for every edge~$\set{x, y} \in E\paren{G^T} \setminus E\paren{G^R}$,
if~$v$ is the unique vertex from~$R$ such that~$\{x, y\} \subset N_{G^R}(v)$, then for every
vertex~$z \in N_{G^R}(v) \setminus \set{x, y}$, the graphs~$G^T + xz$ and~$G^T + yz$
contain~$K_4^-$ as a subgraph. Let~$C$ be the set of all pairs~$\set{x, z}$
and~$\set{y, z}$ with~$\set{x, y} \in E\paren{G^T} \setminus E\paren{G^R}$
and~$z \in N_{G^R}(v) \setminus \set{x, y}$, where~$\{x, y\} \subset N_{G^R}(v)$. Then,
by~\Cref{claim:every_edge_in_at_most_10_Ns}, we have
that~$|C| \geq \paren{2a / 10} \cdot \paren{T / 3n - 2} \geq aT / 5n = \omega\paren{n^2 / T}$.

We finish the strategy by revealing the last~$T$ edges and buying every edge that
lies in~$C$ to obtain the graph~$B_t$. Before the second step, it is possible that
some elements of~$C$ have been revealed when considering the first~$T$
edges. However, no element of~$C$ was revealed during the second step. That is, the
sets~$E\paren{G^T} \setminus E\paren{G^R}$ and~$C$ are disjoint.  Let~$C' \subset C$ be the set of
pairs of~$C$ that were not revealed with the first~$T$ edges. Note that
\whp~$|C'| \geq |C| - 3T/n^2 \geq |C| / 2$. We also have
that~$\Prob{E\paren{B_t} \cap C' \neq \emptyset} \geq \Prob{E\paren{G_T} \cap C' \neq
  \emptyset}$.

Hence, if we are able to show that, \whp the random graph~$G_T$ contains an edge
from~$C'$, we can conclude that \whp~$K_4^- \subset B_t$, and the strategy is
successful in this case.

For a uniformly at random pair~$e$ from~$\binom{[n]}{2}$, let~$X_1, \dots, X_T$ be
independent copies of an indicator random variable defined
as~$\mathbbm{1}_{\set{e \in C'}}$, and let~$X = \sum_{i = 1}^T X_i$. Note that,
for~$e$ as
above,~$\Prob{e \in C'} = |C'| / \binom{n}{2} = \omega\paren{n^2 / T} \cdot n^{-2} =
\omega\paren{1 / T}$. Hence, we have that~$\Expect X = \omega(1)$. Also, by
\Cref{lem:chernoff}, we have
that~$\Prob{X \leq \Expect X / 2} \leq \Prob{|X - \Expect X| \geq \Expect X / 2} \leq
2 \cdot \e^{-\Expect X / 12} = e^{-\omega(1)} = o(1)$. Thus,
\whp~$X > \Expect X / 2 = \omega(1)$. Therefore, \whp algorithm created a $K_4^-$.

\noindent \textbf{Long time with short budget.} We now assume
that~$t = \omega\paren{n^{7 / 5}}$. For this case we have
that~$b \gg b^* = n^{4 / 3} / t^{2 / 3}$. Let~$T = t / 2$.

Similar to the previous case, we now fix a vertex and, while revealing the first~$T$
edges, we buy each edge incident to it, obtaining a neighborhood of size~$O(T / n)$
and having purchased at most~$b / 2$ edges. We then reveal the remaining~$T$ edges,
and buy every edge contained in the neighborhood; if with
high probability two of them are non-disjoint, we construct a copy of~$K_4^-$. We now formalize this strategy.

Let~$v$ be a vertex of the graph~$G_0$. We start by revealing the first~$T$ edges and
buy every edge incident to~$v$, as long as we buy at most~$b / 2$ edges. Let~$G^v$ be
the graph obtained in this step. Let~$N = N_{G^v}(v)$. By
\Cref{lem:degree_concentration}, \whp~$d_{G_T}(v) \geq T / 3n$. Therefore, we
distinguish the cases where~$|N| \geq T / 3n = \omega\paren{n^{2 / 5}}$,
and~$|N| = b / 2 = \omega\paren{n^{4 / 3} / T^{2 / 3}}$, i.e., where we were and were not
restricted by the budget respectively. We proceed to reveal the remaining~$T$ edges
and buy each contained in~$N$, given that we only buy at most~$b / 2$ of them,
obtaining the graph~$B_t$. Note
that~$\Prob{P_3 \subset B_t[N]} = \Prob{P_3 \subset G_T[N]}$.

Note that~$T / n^2 \gg n^{7 / 5} / n^{2} = n^{-3 / 5}$. In the case
where~$|N| = \omega\paren{n^{2 / 5}}$ we have
that~$\paren{T / n^2} |N|^{3 / 2} \gg n^{-3 / 5} \paren{n^{2 / 5}}^{3 / 2} = 1$. In the
case where~$|N| = \omega\paren{n^{4 / 3} / T^{2 / 3}}$, we have
that~$\paren{T / n^2} |N|^{3 / 2} \gg \frac{T}{n^2} \paren{\frac{n^{4 / 3}}{T^{2 /
      3}}}^{3 / 2} = \Omega(1)$. In either case, we have
that~$\paren{T / n^2} |N|^{3 / 2} = \omega(1)$. Thus, by \Cref{lem:threshold_graph_Gnm}, we
conclude that $P_3 \subset G_T[N]$, and the strategy succeeds. This completes the proof of
the~$1$-statement.

\subsection{\texorpdfstring{$0$}{0}-statement}

If~$t \ll n^{6 / 5}$, the graph~$G_t$ \whp does not contain the
graph~$K_4^-$. Therefore, suppose that~$t \gg n^{6 / 5}$, and that~$b \ll b^*$,
where~$b^* = \max\set{n^{6} / t^{4}, n^{4 / 3} / t^{2 / 3}}$. As in
the~$1$-statement, we will split the analysis into two cases depending on~$t$.

\noindent \textbf{Short time with large budget.} Let us first consider the case
where~$n^{6 / 5} \ll t \ll n^{7 / 5}$. In this case we have
that~$b \ll b^* = n^6 / t^4$. We will now prove that there is no~$(t, b)$-strategy
that generates the graph~$K_4^-$.

Clearly, we may assume that~$b = \omega\paren{t / n}$, as increasing the value of~$b$
makes the claim stronger. Let~$G \subset G_t$ be a spanning subgraph with~$b$ edges of the
random graph~$G_t$. We will first prove that, \whp the number of copies of~$C_4$
and~$K_3^+$ in~$G$ is~$O(bt^3 / n^4)$. Assuming this, note that these are the only
graphs that can be formed by taking the graph~$K_4^-$ and removing one edge from
it. Then,~$G$ contains~$O(bt^3 / n^4)$ candidate pairs of vertices that generate the
graph~$K_4^-$, and the probability of hitting one of these edges by revealing only
one edge is at most~$O\paren{bt^3 / n^6} = o\paren{1 / t}$. Hence, by
revealing~$O(t)$ additional edges, \whp none of these edges hit any of the candidate
pairs, and the claim follows. Thus, it only remains to show that \whp the number
copies of~$C_4$ and~$K_3^+$ in~$G$ is~$O(bt^3 / n^4)$.

We start by upper-bounding the number of copies of the graph~$C_4$ in~$G$. We start
by noticing that there are at most~$O(bn / t) \cdot O\paren{t^2 / n^2} = O(bt / n)$
copies of~$P_3$ in~$G$. We can only generate a copy of~$P_4$ by extending a copy
of~$P_3$ or by joining two disjoint edges with a third one. We analyze these two
cases separately. For a vertex~$v$, let~$Y_v$ be the random variable that counts the
number of copies of~$P_3$ with end-vertex~$v$. We have
that~$\Expect{Y_v} = \Theta\paren{\paren{bt / n} / n} = \Theta\paren{bt / n^2}$.
Let~$X_1, \dots, X_t$ be independent copies of a random variable that, when revealing a
random edge on top of~$G$ (possibly an existing one), counts the number of new copies
of the graph~$P_4$ formed by extending a copy of~$P_3$. Given that any copy of~$P_3$ can be extended to a copy of~$P_4$
by adding an edge to either of its end-vertices, we
have~$\mu = \Expect{X_i} = \Theta\paren{\Expect{Y_v}}$. We will use
\Cref{lem:bennett_inequality} to bound the number of copies of~$P_4$.
Let~$S_t = \sum_{i = 1}^t X_i - \mu$. Then, given that every vertex is an end-vertex of
at most~$O(t^2 / n^2)$ copies of~$P_3$, we have that, for
all~$i$,~$|X_i - \mu| \leq ct^2 / n^2$, for some constant~$c$.  Let~$a = ct^2 /
n^2$. Given that, \whp the random variable~$X_i$ is bounded from both sides, we have
that~$\sigma^2 = \sum_{i = 1}^t \Expect\paren{X_i - \mu}^2 \leq at \mu$. We now consider two
cases depending on~$\sigma^2$.

Let~$h(x) = (1+x)\log(1+x)-x$. In the case
where~$\sqrt{a} \cdot t\mu \leq \sigma^2 \leq at\mu$, we have that, for a large enough constant
$C$, we have $h(act\mu / \sigma^2) \geq 1$. Therefore, we have
that
\begin{align*}
  \Prob{S_t > ct\mu} &\leq 2 \exp\paren{-\frac{\sigma^2}{a^2}\cdot h\paren{\frac{act\mu}{\sigma^2}}} \leq
                     \exp\paren{-\frac{\sigma^2}{a^2}} \leq
                     \exp\paren{-\frac{\sqrt{a}t\mu}{a^2}} \\
                   & = \exp\paren{-\frac{t\mu}{t^3 / n^3}} = \exp\paren{-\frac{tc\cdot bt /
                     n^2}{t^3 / n^3}} \\
  &= \exp\paren{-\frac{cbn}{t}} \leq
    \exp\paren{-\omega(1)} = o(1),
\end{align*}
where the last inequality holds by our choice of $b$. Suppose now
that~$\sigma^2 \leq \sqrt{a}\cdot t\mu$. Given
that~$a = \omega\paren{1}$, we
have~$h\paren{at\mu /\sigma^2} \geq at\mu /\sigma^2$. Therefore,
\begin{align*}
  \Prob{S_t > t\mu} &\leq 2\exp\paren{-\frac{\sigma^2}{a^2}\cdot h\paren{\frac{at\mu}{\sigma^2}}} \leq
                       \exp\paren{-\frac{\sigma^2}{a^2}\cdot \frac{at\mu}{\sigma^2}} \\
                     & = \exp\paren{-\frac{t\mu}{a}} \leq \exp\paren{-\frac{bt^2 / n^2}{ct^2 / n^2}}  = o(1).
\end{align*}
Hence, \whp we have that~$S_t < c_4t\mu$, for a large enough constant~$c_4$.
Let~$q = \sum_{i=1}^t X_i$. Given that~$c_4t\mu > S_t = q - t\mu$, we have
that~$q < (c_4 + 1)t\mu$. Therefore, there are at
most~$(c_4 + 1)t\mu = (c_4 + 1)\cdot bt^2 / n^2$ copies of~$P_4$ that extend a copy
of~$P_3$. We now consider copies of~$P_4$ created by joining two disjoint edges with
a third edge. The expectation of generating a copy of~$P_4$ by joining two edges can
be upper bounded by the square of the number of endpoints of the present
edges. Similarly as in the previous case, let~$Y_1, \dots, Y_t$ be independent copies
of a random variable that, when revealing a random edge on top of~$G$, counts the
number of new copies of the graph~$P_4$ obtained by joining two previously disjoint
edges. We have
that~$\mu_2 = \Expect{Y_i} = \Theta\paren{b^2 / n^2} = o\paren{bt / n^2}$. Note
that~$\mu_2 = o(\mu)$. Also, we can bound the differences~$|Y_i - \mu_2|$ and the variance
as in the previous case. Using~\Cref{lem:bennett_inequality}, we get
that~$\sum_{i=1}^tY_i = O\paren{t\mu}$, which is negligible compared to the previous
case. We conclude that~$G$ contains at most~$O(bt^2 / n^2)$ copies of~$P_4$. Finally,
we let~$Z_1, \dots, Z_t$ be independent copies of the same random variable that, when
revealing a random edge on top of~$G$, counts the number of new copies of the
graph~$C_4$. Note that each copy of~$P_4$ generates exactly one possible candidate
pair of vertices for generating a copy of~$C_4$.
Therefore,~$\mu_3 = \Expect{Z_i} \leq c_5bt^2 / n^2 \cdot 1 / n^2 = c_5 bt^2 / n^4$. Observe
that \whp any pair of distinct vertices are end-vertices of~$O(t^3 / n^4)$ copies
of~$P_4$. Indeed, the neighborhoods of such vertices \whp have size~$O(t / n)$. From
the at most~$O(t^2 / n^2)$ pairs of vertices between both neighborhoods,
only~$O(t^3 / n^4)$ of them are edges in~$G_t$. By using
\Cref{lem:bennett_inequality}, we bound~$|X_i - \mu_3| \leq c_6 t^3 / n^4 = a$. We also
have that~$\sigma^2 = \sum_{i = 1}^t \Expect\paren{Z_i - \mu_3}^2 \leq at\mu_3$, and similarly as
above, it is possible to conclude that~$S_t \leq c_7t\mu_3$ by considering two separate
cases depending on the value of~$\sigma^2$, as before. Hence, there are at
most~$c_7 t\mu_3 = O(bt^3 / n^4)$ copies of~$C_4$ in~$G$.

\begin{claim}
  \label{lm:triangle}
  With high probability,~$G$ contains at most~$cbt^2 / n^3$ triangles, for some
  constant $c$.
\end{claim}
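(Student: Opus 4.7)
The plan is to adapt the Bennett's-inequality argument from the $K_4^-$ $0$-statement (used there to bound the number of copies of $P_4$, $C_4$, and $K_3^+$ in $G$) directly to the triangle. The guiding heuristic is that a triangle in $G$ is a copy of $P_3$ in $G$ together with a closing edge; the number of $P_3$'s in $G$ is controlled by $\Delta(G_t)$, and a given closing edge lies in $G_t$ with ``probability'' $\Theta(t/n^2)$, so one should expect $\Theta((bt/n)\cdot(t/n^2)) = \Theta(bt^2/n^3)$ triangles in $G$.

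Concretely, first I would establish the high-probability structural event on $G$. By \Cref{lem:degree_concentration} together with a Chernoff bound, $\Delta(G_t) = O(t/n)$ whp, and since $G\subseteq G_t$ this yields $\Delta(G) = O(t/n)$ and hence a bound of $\sum_v\binom{d_G(v)}{2} \leq \frac{1}{2}\Delta(G)\cdot 2b = O(bt/n)$ on the number of copies of $P_3$ in $G$. Then, mimicking the $K_4^-$ argument, I would let $X$ count the number of new triangles of $G$ that would be created by revealing a uniformly random edge on top of $G$ (possibly an existing one), so that $\mu := \Expect{X} = O((bt/n)/\binom{n}{2}) = O(bt/n^3)$, while the pointwise bound $X \leq \Delta(G) = O(t/n)$ holds because a single new edge $uw$ closes at most $|N_G(u)\cap N_G(w)|$ copies of $P_3$. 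Letting $X_1,\dots,X_t$ be independent copies of $X$, the variance satisfies $\sigma^2 \leq \Delta(G)\cdot t\mu$, and \Cref{lem:bennett_inequality} applied with $\ell=\Theta(t\mu)$, split into the same two regimes on $\sigma^2$ as in the $K_4^-$ analysis of $P_4$, yields $\sum_{i=1}^t X_i = O(t\mu) = O(bt^2/n^3)$ whp; since this sum dominates the number of triangles in $G$, the claim follows.

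The main obstacle is conceptual rather than computational: $G$ is not independent of $G_t$, since it is chosen online based on the random revelations. The remedy, exactly as in the $K_4^-$ $0$-statement, is to first condition on the whp event controlling $\Delta(G)$ and the $P_3$ count of $G$ (both deterministic consequences of $\Delta(G_t)$ and the budget $b$), and only then treat the extra edge samples as independent of the now-fixed coarse structure of $G$, which makes \Cref{lem:bennett_inequality} applicable in the required form.
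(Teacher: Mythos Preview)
Your overall skeleton matches the paper's: bound the number of $P_3$'s in $G$ by $O(bt/n)$ via $\Delta(G_t)=O(t/n)$, set $X_i$ equal to the number of new triangles created by a fresh uniformly random pair so that $\mu=\Expect{X_i}=O(bt/n^3)$, and then use Bennett's inequality to get $\sum_{i\le t}X_i=O(t\mu)=O(bt^2/n^3)$. The conceptual point you raise about $G$ depending on $G_t$ is handled in the paper essentially as you describe.

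There is, however, a genuine computational gap in your Bennett step. You take the pointwise bound $X_i\le\Delta(G)=O(t/n)$ and then invoke the two-regime split from the $K_4^-$ analysis. But with $a=\Theta(t/n)$ and $\ell=\Theta(t\mu)$, both regimes of that split produce an exponent of order $t\mu/a^{3/2}=b\sqrt{t}/n^{3/2}$, and even the more careful estimate $(\ell/a)\log(\ell a/\sigma^2)=\Theta((bt/n^2)\log(t/n))$ is \emph{not} $\omega(1)$ across the whole range $n^{(4k-1)/(3k)}\ll t\ll n^{4/3}$, $b=o(n^{4k-1}/t^{3k-1})$. Concretely, for $k=2$ and $t=n^{5/4}$ one has $b=o(n^{7}/t^{5})=o(n^{3/4})$, whence $bt/n^2=o(1)$ and $b\sqrt{t}/n^{3/2}=o(1)$; the Bennett bound is then $\exp(-o(1))$ and yields nothing. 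So the concentration does not close with your choice of $a$.

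The missing ingredient, which the paper uses explicitly, is regime-specific: because $t\ll n^{4/3}$, with high probability every pair of vertices of $G_t$ has at most a bounded number (the paper takes $5$) of common neighbours, hence $X_i\le 5$ deterministically on that event and one may take $a=5$. With $a=O(1)$ the Bennett exponent is simply $\Theta(t\mu)=\Theta(bt^2/n^3)$, which is $\omega(1)$ under the harmless assumption $b=\omega(n^3/t^2)$ (imposed since the claim is vacuous otherwise). Replace your $X_i\le\Delta(G)$ bound by this $O(1)$ codegree bound and the rest of your outline goes through exactly as in the paper.
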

\begin{claimproof}
  Given that~$t \ll n^{4 / 3}$, we may assume that~$b = \omega\paren{n^3 / t^2}$.
  Let~$X_1, \dots, X_{2t}$ be independent copies of a random variable that, given a
  uniformly at random pair~$e$ from~$\binom{[n]}{2}$, it counts the number of
  triangles in~$G + e$ that contain the edge~$e$. Observe that, we
  have~$\mu = \Expect{X_i} \leq c \cdot nb / t \cdot \paren{t / n}^2 \cdot 1 / n^2$
  since, this expectation is maximized when the number of copies of~$P_3$ is also
  maximum. This can be achieved by sorting the set of vertices by their degrees
  in~$G_t$ in non-increasing order, and then, as long as there is some budget left,
  buy all the edges incident to the current vertex~$v$ in consideration. Clearly
  every vertex~$v$ of~$G_t$ \whp satisfies~$t / 3n \leq d_{G_t}(v) \leq 3t /
  n$. Every purchased edge increases the degree of two vertices, and the graph~$G$
  contains~$b$ edges. Therefore, it is possible to buy all edges from a set of vertices of size at
  most~$2\cdot 3b / \paren{t / n} = 6bn / t$. Each vertex~$v$
  generates~$\binom{d_{G}(v)}{2}$ copies of~$K_3$ and~$P_3$. Now, summing over all
  vertices, the number of copies of~$K_3$ and~$P_3$ is at
  most~$3\cdot 6bn / t \cdot \paren{3t / n}^2 \leq 200 \cdot nb / t \cdot \paren{t /
    n}^2$. Given that, in the above experiment, we choose an edge uniformly at
  random, the expectation
  satisfies~$c_1 \cdot nb / t \cdot \paren{t / n}^2 \cdot 1 / n^2 \leq \mu \leq c
  \cdot nb / t \cdot \paren{t / n}^2 \cdot 1 / n^2 = c \cdot bt / n^3$.

  An algorithm that generates~$G$, at any point buys at most~$b$ edges. Given that
  the probability of generating a triangle is monotone with respect to edge set
  inclusion, the probability is maximized with~$G$, i.e., when the algorithm buys
  all~$b$ edges. The objective is to conclude that any algorithm \whp, after being
  offered to buy~$b$ out of~$t$ random edges, can generate at
  most~$c\cdot t\cdot \mu$ triangles with budget $b$, for some constant
  $c$. That is, the graph~$G$ contains at most~$O\paren{ct\mu}$ triangles.

  We get this by applying \Cref{lem:bennett_inequality} for the sequence of~$2t$
  independent random variables~$X_i$, where the pairs can be sampled multiple
  times. Between the sampled~$2t$ random pairs, \whp there are at least~$t$ different
  edges. Denote by~$S_t = \sum_{i=1}^t (X_{i} - \mu)$. Now we bound
  variance~$\sigma^2 = \sum_{i=1}^t \Expect\paren{X_{i} - \mu}^2$. 
  Since in the currently considered case we have $t=O(n^{4/3})$, where the exponent in $n$ is less than $3/2$, we can apply~\Cref{claim:every_edge_in_at_most_c_Ns} to get a constant $c_\eps$ with a bound that each pair of vertices $u, v$ is \whp in at most $c_\eps$ neighborhoods.
  Some edge $(u, v)$ creates a triangle if and only if it is placed into the neighborhood of some vertex in $G^t$.
  Therefore we can conclude that \whp $X_i \leq c_\eps$.
  Given that, for all~$i$,~$X_i$ attains only integer values, and
  we have~$X_i \leq c_\eps$, it follows
  that~$\mu / c_\eps \leq \Prob{X_i > 0} \leq \mu$ and~$|X_i - \mu| \leq c_\eps = a$. Therefore, we have
  that~$\sigma^2 \leq t \cdot c_\eps^2\mu$. We now apply \Cref{lem:bennett_inequality}
  to obtain
  \begin{align*}
    \Prob{S_t > ct\mu} \leq 2\exp\paren{-\frac{\sigma^2}{a^2}\cdot h\paren{\frac{act\mu}{\sigma^2}}},
  \end{align*}
  where $h(x) = (1+x)\log(1+x)-x$. From our bounds on~$\sigma^2$, we have that, for a
  sufficiently large constant~$c_3$,$ac_3t\mu / \sigma^2 \geq 3$,
  and~$h\paren{ac_3t\mu / \sigma^2} \geq ac_3t\mu / \sigma^2$. Finally, we have the bound
  \begin{align*}
    \Prob{S_t > c_3t\mu} \leq 2\exp\paren{-\frac{\sigma^2}{a^2}\cdot h\paren{\frac{ac_3t\mu}{\sigma^2}}} \leq \exp\paren{\frac{-c_3t\mu}{a}} = o(1),
  \end{align*}
  where the last equality holds by our bound on~$\mu$ and by our choice of~$b$.
  Let $q = \sum_{i=1}^{2t}X_i$. Then, we have
  that~$q - t\mu = S_t \leq c_3t\mu$.  Thus,~$q \leq (c_3+1)t\mu$. 
  Since $q$ is the number of generated triangles, then \whp~$G$ contains at
  most~$c_3t\mu \leq cc_3bt^2 = c'bt^2 /
  n^3$ triangles.
\end{claimproof}

Now we bound the number of copies of~$K_3^+$ in~$G$. Since~$b = \omega\paren{n^3 / t^2}$, by~\Cref{lm:triangle} the number of copies of~$K_3$ in~$G$ is at
most~$O\paren{bt^2 / n^3}$. Then, as every vertex of~$G$ has
degree~$O(t / n)$, every copy of a triangle is part of at most~$O(t / n)$ copies
of~$K_3^+$. Hence,~$G$ contains~$O\paren{bt^3 / n^4}$ copies of~$K_3^+$.
 
\noindent \textbf{Long time with short budget.} Let us now consider the case
where~$t = \omega\paren{n^{7 / 5}}$. In this case we have
that~$b \ll b^* = n^{4 / 3} / t^{2 / 3}$. Let~$G$ be a spanning subgraph of~$G_t$
with~$b$ edges. Similarly as in the previous case, we bound the number of copies
of~$C_4$ and~$K_3^+$.

Notice that we can count the number of copies of~$C_4$ by summing over each pair of
vertices and squaring their common neighborhood.
To upper-bound the number of copies of $C_4$ it is enough to sum the squares of common neighborhoods over all pairs of vertices.
We will show that \whp any
vertex~$v$ satisfies~$\sum_{u\in V(G), u\neq v}|N(v) \cap N(u)|^2 < c$ for some constant~$c$.
The probability that a random edge increases the size of a common neighborhood of some two vertices is increasing with adding more edges into the graph.
Since any algorithm can buy at most $b$ edges, we can upper-bound the probability by having exactly $b$ edges in the graph $G_B$.
Clearly, there are at most~$2b$ non-isolated vertices, say, those in~$B$.
Notice that when we increase the size of common neighborhood of some vertices $u$ and $v$, by adding one edge, then before adding that edge there must be a vertex $x$, which is connected to $u$ or $v$.
Specifically, the vertex $x$ has to have at least one edge already adjacent to it.
If some vertex $v$ by adding edge $e$ increases its degree above 1, then before adding edge $e$ it must hold $v\in B$.
Therefore we can just focus on the subgraph consisting of $b$ vertices in $B$, denoted as $G_B$.
We analyze random graph $G_B$, where we add each edge with original probability $p=\Theta(t/n^2)$.
Due to the conditions imposed on $b$ it holds $p = o(1/b)$.
If there is a random graph on $b$ vertices and each edge is sampled with probability $o(1/b)$, then \whp there exists a constant $c$ such that each vertex $v\in G_B$ has degree at most $c$.
If we fix some vertex $v$ then $\sum_{u\in V(G), u\neq v}|N(v) \cap N(u)|^2 \leq d_{G_B}(v)^2 \leq c^2$.
Therefore \whp in~$G$, every vertex~$v$ is contained in at most $c^2$ distinct copies of $C_4$.
Any algorithm can buy at most $b$ edges, therefore there are at most $2b$ vertices that are part of at least one copy of $C_4$.
Then, \whp~$G$ contains at most~$c^2\cdot 2b = o(n^2 / t)$ copies of~$C_4$.

To bound the number of copies of~$K_3^+$, notice that that, in order to maximize the
number of triangles, we can assume there is a vertex~$v$ with degree~$O(b)$. Then,
the expected number of triangles in~$G$ is~$O\paren{b^2 t / n^2}$.
Also, observe
that, since the sum of all degrees that belong to a triangle is~$O(b)$, there are at
most~$b$ copies of the graph~$K_3^+$ that can be obtained from a single triangle.
Then, \whp there are at
most~$O\paren{b^3 t / n^2} = o\paren{\paren{n^4 / t^2 \cdot t} / n^2} = o(n^2 / t)$
copies of~$K_3^+$ in~$G$.
In this case candidate edges come from the graph $K_3^+$, therefore the number of copies of $K_3^+$ is also an upper bound for the number of candidate edges.

Given that~$G$ contains at most~$o(n^2 / t)$ copies of~$C_4$ and~$K_3^+$, by a
similar argument as in the previous case, \whp~$G$ contains no~$K_4^-$.

\section{Proof of Theorem~\ref{thm:k-butterfly}}

\Cref{thm:k-butterfly} for~$k = 1$, was proved in \cite[Theorem
6]{frieze2023}. Hence, we assume that~$k \geq 2$.

\subsection{\texorpdfstring{$1$}{1}-statement}

Let~$t = \omega\paren{n^{\frac{4k - 1}{3k}}}$, and let~$b \gg b^*$,
where~$b^* = \max\set{n^{4k - 1} / t^{3k - 1}, n / \sqrt{t}}$.

\noindent \textbf{Short time with large budget.} We first consider the case
where~$n^{\frac{4k - 1}{3k}} \ll t = O\paren{n^{4 / 3}}$. In this case we have
that~$b \gg b^* = n^{4k - 1} / t^{3k - 1}$. Let~$T = t / (k + 1)$, and
let~$r = \omega\paren{n^{4k} / T^{3k}}$ such that~$rt / n = o(b)$.

Roughly speaking, the strategy in this case is to pick a set of~$r$ vertices, and
then reveal the first~$T$ edges of the graph, buying every edge that intersects these
selected vertices as long as the neighborhoods have size~$\Theta(t / n)$. We then
iteratively reveal~$T$ edges at a time, buying edges inside the neighborhoods and,
in each iteration deleting from the initial set of~$r$ vertices, those whose
neighborhoods were not hit by any revealed edge. After~$k$ of these iterations, we
still preserve~$\Omega\paren{r T^{3k} / n^{4k}} = \omega\paren{1}$ of these vertices, and each one
contains at least~$k$ disjoint edges in their neighborhoods, which generates the
graph~$T_k$. We now formalize this strategy.

Let~$R$ be an arbitrary set of~$r$ vertices of the graph~$G_0$. We first reveal the
first~$T$ edges and buy every edge that intersects~$R$, as long as every vertex
from~$R$ has at most~$T / 2kn$ previously bought edges. Let~$G^R$ be the graph
obtained as a result from this step. By construction,~$G^R$ contains at
most~$r \cdot \paren{T / 2kn + 1} = o(b)$ edges. By \Cref{lem:degree_concentration},
\whp every vertex~$v \in R$ satisfies~$d_{G_T}(v) \geq T / 3n \geq T / 2kn$, and
hence,~$d_{G^R}(v) \geq T / 2kn$ for all~$v \in R$. Thus, each set~$N_{G^R}(v)$ has
size~$T / 2kn$. We first make the following claim, whose proof we omit, as it is
analogous to the proof of \Cref{claim:every_edge_in_at_most_10_Ns}.
\begin{claim}
  \label{claim:at_most_10_neigh_T_k}
  With high probability, every pair~$e \in \binom{[n]}{2}$ is contained in at
  most~$10$ of the sets~$N_{G^R}(v_i)$.
\end{claim}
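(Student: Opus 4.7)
The plan is to follow the same template as the proof of \Cref{claim:every_edge_in_at_most_10_Ns}, updating only the arithmetic to the present regime $t = O(n^{4/3})$.

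First I would fix an index $i \in [r]$. By the symmetry of the builder's selection rule among the potential partners of $v_i$, the set $N_{G^R}(v_i)$ is, conditionally on its size, uniformly distributed over the subsets of $[n] \setminus \set{v_i}$ of that size. Since this size equals $s = T/(2kn) = \Theta(t/n)$ (with $k$ fixed), for any pair $e' \in \binom{[n] \setminus \set{v_i}}{2}$,
\begin{equation*}
  \Prob{e' \subset N_{G^R}(v_i)} = \frac{\binom{s}{2}}{\binom{n-1}{2}} = O\paren{\frac{t^2}{n^4}},
\end{equation*}
and the same bound holds trivially for pairs $e$ containing $v_i$. Moreover, for distinct $i, j \in [r]$, the events $\set{e \subset N_{G^R}(v_i)}$ and $\set{e \subset N_{G^R}(v_j)}$ are independent, since they are determined by disjoint families of potential edges incident to $v_i$ and $v_j$ respectively.

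Next, for a fixed pair $e \in \binom{[n]}{2}$, the probability that there exists $I \subset [r]$ with $|I| \geq 11$ and $e \subset N_{G^R}(v_j)$ for all $j \in I$ is at most
\begin{equation*}
  \binom{r}{11} \paren{O\paren{\frac{t^2}{n^4}}}^{11} = O\paren{\frac{r^{11} t^{22}}{n^{44}}} = o\paren{\frac{b^{11} t^{11}}{n^{33}}},
\end{equation*}
where the last equality uses $r = o(bn/t)$. A union bound over the $O(n^2)$ pairs in $\binom{[n]}{2}$ then gives a total failure probability of $o\paren{b^{11} t^{11}/n^{31}}$.

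The final step is to verify that this quantity is $o(1)$ in the current regime. Since the builder never purchases more edges than it observes, we may assume $b \leq t$; combined with $t = O(n^{4/3})$, this yields $b^{11} t^{11} \leq t^{22} = O(n^{88/3})$, so the union bound is $O(n^{88/3 - 31}) = O(n^{-5/3}) = o(1)$, as required. The argument is essentially a direct transcription of the $K_4^-$ case, and the only real obstacle is this last numerical check, since the admissible range of $t$ is different; a direct substitution confirms that the exponent remains comfortably negative.
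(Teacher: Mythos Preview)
Your proposal is correct and mirrors the paper's approach exactly: the paper's own proof of \Cref{claim:at_most_10_neigh_T_k} simply states that it is analogous to \Cref{claim:every_edge_in_at_most_10_Ns}, and you have faithfully transcribed that argument with the updated numerical check for the regime $t=O(n^{4/3})$. Your final arithmetic using $b\le t$ and $t^{22}=O(n^{88/3})$ is valid and gives the needed $o(1)$ bound.
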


We now perform operations iteratively~$k$ times. We let~$R^{\paren{0}} = R$ be a set
of size~$r^{\paren{0}} = r$, and let~$G_{R^{\paren{0}}} = G_R$. In the~$i$-th round,
for~$i = 1, \dots, k$, we start with the graph~$G_{R^{\paren{i - 1}}}$ reveal~$T$
edges. For an edge~$e$ that was revealed in this process, we buy~$e$
if~$e \subset N_{G^R}(v)$ for some~$v \in R^{\paren{i - 1}}$, and~$e$ is disjoint from any
edge in~$E\paren{G^{R^{\paren{i - 1}}}[N_{G^R}(v)]}$, ie. the set of previously
bought edges inside this neighborhood. We do this for every revealed edge, so long as
not more than~$b / (k + 1)$ edges are bought in this round. After this, we
let~$R^{\paren{i}} \subset R^{\paren{i - 1}}$ be the set of vertices from~$R$ for which an
edge was bought inside its neighborhood, and let~$r^{\paren{i}} =
|R^{\paren{i}}|$. We also let~$G^{R^{\paren{i}}}$ be the graph obtained from this
round.

Fix~$i = 1, \dots, k$. Given a uniformly at random pair~$e \in \binom{[n]}{2}$, we
will define the random variables~$X_i$ and~$Z_i$ as follows. Let~$X_1, \cdots, X_T$ be
independent copies of a random variable that counts the number of
vertices~$v \in R^{\paren{i - 1}}$ such that~$e \subset N_{G^R}(v)$, and such
that~$e$ is disjoint from every edge of~$G^{R^{\paren{i - 1}}}[N_{G^R}(v)]$.
Let~$X = X_1 + \cdots + X_T$, i.e., we perform this experiment~$T$ times. Under this
setting, and for every vertex~$v \in R^{\paren{i - 1}}$, let~$Z_v$ be the number of
pairs from~$\binom{N_{G^R}(v)}{2}$ that were sampled.

For a given vertex~$v \in R^{\paren{i - 1}}$, observe that, given that there are at
most~$k$ edges in the graph~$G^{R^{\paren{i - 1}}}[N_{G^R}(v)]$, there
are~$\Theta\paren{2k \cdot |N_{G^R}(v)|} = \Theta\paren{T / n}$ pairs
in~$\binom{N_{G^R}(v)}{2}$ that are non-disjoint from some edge of the
graph~$G^{R^{\paren{i - 1}}}[N_{G^R}(v)]$. Hence, we
have~$\Expect{X_1} = \Theta\paren{\paren{r^{\paren{i -
        1}}\paren{\paren{\frac{T}{2kn}}^2 - \frac{T}{n}}} / n^2} =
\Theta\paren{r^{\paren{i - 1}} T^2 / n^4}$.  Then,
\begin{align*}
  \mu = \Expect{X} = T \Expect{X_1} = \Theta\paren{r^{\paren{i - 1}} T^3 /
  n^4}.
\end{align*}
We claim that \whp~$X \geq \mu / 2$. Indeed, by
\Cref{claim:at_most_10_neigh_T_k},~$X_i$ \whp is at most~$10$. We also have
that~$\Prob{X_i \geq 1} \leq \Expect{X_i} = o(1)$. Therefore, we have
that~$\sigma^2 = \sum_{i = 1}^T \Expect{X_i^2} \leq 100\mu (1 + o(1))$. Hence, by using
\Cref{lem:bennett_inequality} with~$\ell = \mu / 2$, we have
that~$\Prob{|X - \Expect{X}| \geq \mu / 2} = o(1)$.

Consider a set~$E \subset \binom{[n]}{2}$ of~$t$ pairs of vertices. For a uniformly
random pair~$e \in \binom{[n]}{2}$, let~$Y_1, \dots, Y_T$ be independent copies of a
random variable defined
as~$\mathbbm{1}_{\set{e \in E}} \cdot |\set{v \in R : e \subset N_{G^R}(v)}|$. Then,
by \Cref{lem:bennett_inequality}, it is possible to show that
\whp~$Y_1 + \cdots + Y_T = o(\mu)$. Also, note that,
since~$T = O\paren{n^{4 / 3}} = O(n^{7 / 5})$ we can use
\Cref{claim:sample_T_at_most_5_times} and \Cref{claim:at_most_10_neigh_T_k} to
conclude that,
\whp~$\sum_{v \in R^{\paren{i - 1}}} Z_v \geq \frac{1}{50} \paren{\sum_{i = 1}^T X_i}
- \sum_{i = 1}^T Y_i \geq \mu / 50 - o(\mu) \geq \mu / 100 = \Omega\paren{r^{\paren{i
      - 1}} T^3 / n^4}$. In the case that there is a vertex~$v \in R^{\paren{i - 1}}$
such that~$e\paren{G_{R^{\paren{i - 1}}}[N_{G^R}(v)]} \geq k$, clearly the
graph~$T_k$ is already constructed. If this is not the case, then we
have~$r^{\paren{i}} \geq \frac{1}{k} \sum_{v \in R^{\paren{i - 1}}} Z_v =
\Omega\paren{r^{\paren{i - 1}} T^3 / n^4}$. Finally, we have that,
\whp~$r^{\paren{k}} \geq r T^{3k} / n^{4k} \gg \paren{n^{4k} / T^{3k}} \paren{T^{3k}
  / n^{4k}} = 1$.

\noindent \textbf{Long time with short budget.} We now consider the case
where~$t = \omega\paren{n^{4 / 3}}$. In this case we have
that~$b \gg b^* = n / \sqrt{t}$. Let~$T = t / 2$.

Similar to the previous case, we now fix a vertex and, while revealing the first~$T$
edges, we buy each edge incident to it, obtaining a neighborhood of size~$O(T / n)$
and having purchased at most~$b / 2$ edges. We then reveal the remaining~$T$ edges,
and buy every edge contained in the neighborhood. Then, with high probability,~$k$
of those edges are disjoint, constructing a copy of~$T_k$. We now formalize this
strategy.

Let~$v$ be a vertex of the graph~$G_0$. We start by revealing the first~$T$ edges and
buy every edge incident to~$v$, as long as we buy at most~$b / 2$ edges. Let~$G^v$ be
the graph obtained in this step. Let~$N = N_{G^v}(v)$. By
\Cref{lem:degree_concentration}, \whp~$d_{G_T}(v) \geq T / 3n$. Therefore, we
distinguish the cases where~$|N| \geq T / 3n = \omega\paren{n^{1 / 3}}$,
and~$|N| = b / 2 = \omega\paren{n / \sqrt{T}}$, i.e., where we were and were not
restricted by the budget respectively. We proceed to reveal the remaining~$T$ edges
and buy each contained in~$N$, given that we only buy at most~$b / 2$ of them,
obtaining the graph~$B_t$. Note
that~$\Prob{kK_2 \subset B_t[N]} = \Prob{kK_2 \subset G_T[N]}$.

Note that~$T / n^2 \gg n^{-2 / 3}$. In the case
where~$|N| = \omega\paren{n^{1 / 3}}$, we have
that~$\paren{T / n^2} |N|^{2} \gg n^{-2 / 3} \paren{n^{1 / 3}}^{2} = 1$. In the case
where~$|N| = \omega\paren{n / \sqrt{T}}$, we have
that~$\paren{T / n^2} |N|^{2} \gg \frac{T}{n^2} \paren{\frac{n}{\sqrt{T}}}^{2} = 1$.
Thus, by \Cref{lem:threshold_graph_Gnm}, \whp~$kK_2 \subset G_T[N]$, and the strategy
succeeds. This completes the proof of the~$1$-statement.

\subsection{\texorpdfstring{$0$}{0}-statement}

If we assume that~$t = \omega\paren{n^{4 / 3}}$ and~$b = o\paren{b^*}$.
Then if~$b = o\paren{n / \sqrt{t}}$, by \cite[Theorem 6]{frieze2023}, \whp any strategy fails to build even a
triangle. Therefore, we may assume that~$n^{\frac{4k - 1}{3k}} \ll t \ll n^{4 / 3}$,
and~$b = o\paren{b^*}$. We then have that~$b = o\paren{n^{4k-1} / t^{3k - 1}}$. Let~$G$
be a spanning subgraph with~$b$ edges of the random graph~$G_t$.

\begin{claim}
  \label{lm:inductive}
  For a positive integer~$\ell < k$, let~$V_\ell$ be the set of vertices of~$G$ that are
  the central vertex of a copy of~$T_{\ell}$ in~$G$. Then,
  \whp~$|V_\ell| \leq c_\ell bt^2 / n^3 \cdot \paren{t^3 / n^4}^{\ell - 1}$, for some constant
  $c_\ell$.
\end{claim}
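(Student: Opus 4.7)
The plan is to proceed by induction on $\ell \ge 1$. The base case $\ell = 1$ is essentially immediate from \Cref{lm:triangle}: since whp $G$ contains at most $c\,bt^2/n^3$ triangles and every vertex of $V_1$ lies on such a triangle, $|V_1| \leq 3c\,bt^2/n^3$, giving the bound with $c_1 = 3c$.

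For the inductive step, it is cleanest to pass to the auxiliary quantity $W_\ell$ defined as the number of ordered $T_\ell$-structures in $G$, that is, tuples $(v,\Delta_1,\dots,\Delta_\ell)$ where $v \in V(G)$ and $\Delta_1,\dots,\Delta_\ell$ are triangles of $G$ through $v$ that pairwise intersect only at $v$. Since every unordered $T_\ell$ centred at $v$ gives rise to $\ell!$ ordered structures, we have $|V_\ell| \leq W_\ell/\ell!$. The key combinatorial observation is that every ordered $T_\ell$-structure arises from an ordered $T_{\ell-1}$-structure at the same centre $v$ by appending a triangle through $v$ disjoint from the first $\ell-1$ triangles, and the number of appending choices at $v$ is bounded by $f_v := |E(G[N_G(v)])|$, the total number of triangles of $G$ through $v$. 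This yields the deterministic recursion
\begin{equation*}
  W_\ell \;\leq\; W_{\ell-1}\cdot \max_{v} f_v,
\end{equation*}
whose base case $W_1 = \sum_v f_v$ is three times the number of triangles of $G$, hence at most $3c\,bt^2/n^3$ whp by \Cref{lm:triangle}.

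The remaining task is then the single-step bound $\max_v f_v = O(t^3/n^4)$ whp. For a fixed vertex $v$, $f_v$ is at most the number of triangles through $v$ in the host random graph $G_t$, a quantity with expectation $\Theta(t^3/n^4)$; a Chernoff (or Bennett) concentration bound applied per vertex, followed by a union bound over the $n$ vertices, yields the desired uniform bound. Combining this with the recursion gives $W_\ell \leq c'_\ell\,(bt^2/n^3)\,(t^3/n^4)^{\ell-1}$, and dividing by $\ell!$ yields the claimed bound on $|V_\ell|$.

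The main obstacle I anticipate is the regime where $t^3/n^4$ is not substantially larger than $\log n$: there, a naive Chernoff tail on $f_v$ only gives $f_v = O(\max\{t^3/n^4,\log n\})$, and this additive logarithmic slack compounds to a spurious factor of $(\log n)^{\ell-1}$ after iterating the recursion. To absorb it and obtain the tight bound in that regime, I would apply Bennett's inequality directly to $W_\ell$, viewed as a sum of indicator random variables over the $t$ independent random edges of $G_t$, in the spirit of the proof of \Cref{lm:triangle}, exploiting the independence across edges to recover the correct multiplicative constant without the logarithmic loss.
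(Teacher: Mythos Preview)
Your primary approach via the deterministic recursion $W_\ell \leq W_{\ell-1} \cdot \max_v f_v$ fails throughout the regime under consideration, not merely up to a logarithmic factor. In the $0$-statement we have $t \ll n^{4/3}$, so $t^3/n^4 = o(1)$ always; the expected number of triangles of $G_t$ through a fixed vertex is $o(1)$, while $\max_v f_v$ is a nonnegative integer that is at least $1$ whenever $G$ contains any triangle (and it does, since one may assume $b = \omega(n^3/t^2)$). No concentration argument can therefore give $\max_v f_v = O(t^3/n^4)$: at best $\max_v f_v = O(1)$, and your recursion then yields only $W_\ell = O(W_1)$, a polynomial loss of order $(n^4/t^3)^{\ell-1}$ compared to the target, not the $(\log n)^{\ell-1}$ you anticipate.

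What you sketch in the final paragraph is the right idea, and it is essentially the paper's argument --- but carried out for $|V_\ell|$ rather than $W_\ell$, which matters. The paper runs the induction on $|V_\ell|$ and gains the factor $t^3/n^4$ per step as an \emph{average} over the edges of $G_t$ rather than a per-vertex maximum: for a uniformly random pair $e=\{u,v\}$, let $X_e$ count the vertices $w\in V_{\ell-1}$ adjacent in $G_t$ to both $u$ and $v$; then $\Expect{X_e}=O\bigl(|V_{\ell-1}|\,(t/n)^2/n^2\bigr)$, and \whp $X_e\le 5$ since any pair has at most five common neighbours in $G_t$. Bennett's inequality applied to $\sum_{i\le 2t} X_i$, exactly as in \Cref{lm:triangle}, then gives $|V_\ell|\le O\bigl(t\cdot\Expect{X_e}\bigr)=O\bigl(|V_{\ell-1}|\cdot t^3/n^4\bigr)$. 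Trying to run the same argument for $W_\ell$ directly is harder: the per-edge contribution to $W_\ell$ scales with the number of ordered $T_{\ell-1}$-structures at the common neighbours of $e$, which is not uniformly bounded, so the boundedness hypothesis of \Cref{lem:bennett_inequality} is unavailable. Working with $|V_\ell|$ sidesteps this, and that is the substance of the inductive step --- your ``fix'' is the whole proof, not a patch.
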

\begin{claimproof}
  We proceed by induction on $\ell$. For~$\ell = 1$, i.e., when~$T_\ell$ is a triangle, we
  know from \Cref{lm:triangle} that \whp~$G$ contains at most~$cbt^2 / n^3$
  triangles, which cover at most~$3cbt^2 / n^3$ distinct vertices.
  Therefore~$|V_1| \leq 3c \cdot \frac{bt^2}{n^3}$.

  Suppose that the statement holds for all smaller values than~$\ell > 1$, and suppose
  that~$|V_{\ell-1}| \leq c_{\ell-1}b \paren{t^2 / n^3}^{\ell - 1}$. Clearly every the central
  vertex of a copy of~$T_{\ell}$ is also the central vertex of a copy
  of~$T_{\ell - 1}$. We can assume
  that~$b = \omega\paren{n^{3 + 4(\ell-1)} / t^{2 + 3(l-1)}}$. With this, observe that for
  all~$\ell < k$,~$|V_\ell| = \omega(1)$. Indeed, because the size of the set~$V_\ell$ is non-increasing
  as~$\ell$ increases and~$|V_{k-1}| = \omega(1)$.

  Let~$X_1, \dots, X_{2t}$ be independent copies of a random variable that, given a
  uniformly at random pair~$e = \set{u, v}$ from~$\binom{[n]}{2}$, it counts the
  number of vertices $w$ from~$V_{\ell - 1}$ that became central vertices for some copy of~$T_\ell$ in~$G_t$ by adding the edge $e$. 
  If we choose some vertex $v\in V_{\ell - 1}$, there are only two ways to add some edge such that a new copy of $T_\ell$ is formed with vertex $v$ as a central vertex.
  Either we can add some edge to the neighborhood of vertex $v$ or we can add some edge that connects the endpoints of the path $vv_1v_2$, where $v_1,v_2 \in V(G)$.
  Given that \whp every vertex~$v$
  satisfies~$t / 3n \leq d_{G_t}(v) \leq 3t / n$, there are at most $\paren{3t / n}^2$ edges in the neighborhood of some vertex $v\in V_{\ell - 1}$ and there are at most $\paren{3t / n}^2$ paths of length 2 with starting point $v$.
  Thus for each vertex from~$V_{\ell-1}$ there are
  at most~$2\paren{3t / n}^2$ edges that could possibly complete a copy of $T_\ell$ with central vertex $v$.
  Therefore \whp
  \begin{align*}
    \mu = \Expect{X_i} \leq c |V_{\ell-1}|  \paren{t / n}^2 \paren{1 / n^2} \leq cb \paren{t^2 /
    n^3} \paren{t^3 / n^4}^{\ell - 2} \cdot \paren{t^2 / n^4}.
  \end{align*}
  We will prove that~$G$ contains at most~$ct\mu$ copies of~$T_{\ell - 1}$, for some
  constant~$c$. We do this by applying \Cref{lem:bennett_inequality} over the random
  variables~$X_1, \dots, X_{2t}$. Observe that when choosing these random pairs, \whp
  there are at least~$t$ distinct pairs.
  Let~$S_t = \sum_{i=1}^t \paren{X_{i} - \mu}$. Now we bound
  variance~$\sigma^2 = \sum_{i=1}^t \Expect\paren{X_{i} - \mu)}^2$. With high probability, for
  each pair~$uv$, there are at most 5 common neighbors of~$u$ and~$v$ in $G_t$,
  therefore each edge is adjacent to at most 5 vertices from~$V_{\ell-1}$.
  Thus it completes a copy of $T_\ell$ by inserting into the neighborhood for at most 5 vertices from $V_{\ell - 1}$.
  Also, each edge can complete a copy of $T_\ell$ by connecting the endpoints of path of length 2 for at most 2 vertices from $V_{\ell - 1}$.
  Hence \whp we can bound $X_i \leq 7$.
  Given that~$X_i$ takes only integer values, and~$X_i \leq 7$, we have
  that~$\mu / 7 \leq \Prob{X_i > 0} \leq \mu$ and~$|X_i - \mu| \leq 7 = a$. Thus, we have
  that~$\sigma^2 \leq 49t\mu$. Let $c$ be some constant. We now apply \Cref{lem:bennett_inequality} to obtain
  \begin{align*}
    \Prob{S_t > ct\mu} \leq \exp\paren{-\frac{\sigma^2}{a^2}\cdot h\paren{\frac{act\mu}{\sigma^2}}},
  \end{align*}
  where $h(x) = (1+x)log(1+x)-x$. We also have that, for a sufficiently large
  constant~$c$,~$\frac{act\mu}{\sigma^2} \geq 3$. Therefore~$h\paren{act\mu / \sigma^2} \geq act\mu / \sigma^2$. With this, we
  get
  \begin{align*}
    \Prob{S_t > ct\mu} \leq \exp\paren{-\frac{\sigma^2}{a^2}\cdot h\paren{\frac{act\mu}{\sigma^2}}} \leq \exp\paren{-\frac{ct\mu}{a}} = o(1).
  \end{align*}
  Last equality comes from the fact that $t\mu = \omega(1)$. Let $q = \sum_{i=1}^{2t}X_i$. Then, we have
  that~$q - t\mu = S_t \leq ct\mu$.  Thus,~$q \leq (c+1)t\mu$. Since $q$ is an upper bound for the number of distinct vertices that can be a central vertex for~$T_\ell$ then, \whp~$G$
  satisfies~$|V_\ell| \leq q \leq (c+1)t\mu \leq c_\ell b \paren{t^2 / n^3} \paren{t^3 / n^4}^{\ell-2} \paren{t^2
    / n^4} \cdot t = c_\ell \paren{bt^2 / n^3} \paren{t^3 / n^4}^{\ell - 1}$.
\end{claimproof}

To construct the graph~$T_k$, the algorithm must first construct the
graph~$T_{k - 1}$. By \Cref{lem:degree_concentration}, every vertex~$v$ of~$G_t$ \whp
has degree at most~$3t / n$. By using \Cref{lm:inductive} with~$\ell = k - 1$, the
set~$V_{k - 1}$
satisfies~$|V_{k - 1}| \leq c_\ell bt^2 / n^3 \cdot \paren{t^3 / n^4}^{k - 1}$. Combining this
with the degree condition above, we have that every vertex from~$V_{k - 1}$ creates
at most~$10t^2 / n^2$ candidate pairs of vertices, say those in~$E_f$, that create
the graph~$T_k$. Thus, summing up this quantity over all vertices in~$V_{k - 1}$, we
have
that~$|E_f| \leq cbt^2 / n^3 \paren{t^3 / n^4}^{k - 2} \paren{t^2 / n^2} = cb
\paren{t^{3k - 2} / n^{4k - 3}}$.

Clearly, if the graph~$T_k$ is constructed, then at some point the last edge of such
graph was purchased. For all~$t' > 0$, The probability that such edge arrives at
time~$t'$ is at most~$|E_f| / |E_{t'}|$, where~$E_{t'}$ is the set of all possible
edges that we can obtain in the random graph process at time~$t'$.
Then~$|E_{t'}| \geq \binom{n}{2} - t' \geq n^2 / 4$, for large enough~$n$. We then have
that
\begin{equation*}
  \frac{|E_f|}{|E_{t'}|} \leq cb \cdot \frac{t^{3k - 2}}{n^{4k - 3}} \cdot \frac{1}{n^2} = cb
  \cdot \frac{t^{3k - 2}}{n^{4k - 1}} = o(1 / t).
\end{equation*}
Therefore, the last edge of~$T_k$ arrives at time~$t$ with probability~$o(1)$. This
completes the proof of the~$0$-statement.

\section{Concluding remarks}
Notably, coming up with the optimal strategy for constructing the graph~$K_4^-$ is
more challenging than for the graph~$T_k$. In fact, the density of the graph
(specifically, the maximum local edge density) is a critical factor in determining
the difficulty of such problems, particularly for the~$1$-statement, which
establishes the upper bound. At the same time, we believe that our proof of
the~$0$-statement could potentially be further generalized and extended to arbitrary
small graphs. Consequently, a natural question is to come up with an optimal strategy
for generating cliques of arbitrary size.
\begin{problem}
  Let $m \ge 4$, and let~$t = \omega\paren{n^{\frac{2m-4}{m-1}}}$, so that~$G_t$ \whp
  contains a copy of $K_m$. For which values of~$b$ does Builder have
  a~$(t, b)$-strategy that successfully constructs a copy of~$K_m$ with high
  probability? Conversely, for which values of~$b$ does Builder fail with high
  probability to construct a copy of~$K_m$?
\end{problem}

\end{document}